\documentclass[11pt]{article}

\usepackage[T1]{fontenc}
\usepackage[utf8]{inputenc}
\usepackage[margin=1in]{geometry}
\usepackage{amsmath,amssymb,amsthm,mathtools}
\usepackage{booktabs}
\usepackage{enumitem}
\usepackage{microtype}
\usepackage[colorlinks=true,linkcolor=blue,citecolor=blue,urlcolor=blue]{hyperref}

\newtheorem{theorem}{Theorem}
\newtheorem{proposition}{Proposition}
\newtheorem{fact}{Fact}
\newtheorem{lemma}{Lemma}
\newtheorem{conjecture}{Conjecture}
\newtheorem{problem}{Problem}
\theoremstyle{definition}
\newtheorem*{notation}{Notation}

\DeclareMathOperator{\ex}{ex}
\DeclareMathOperator{\EX}{EX}
\newcommand{\Wge}[1]{W_{\ge #1}}
\newcommand{\Cge}[1]{C_{\ge #1}}
\newcommand{\dG}{d_G}
\newcommand{\Nbr}{N_G}

\title{Counting triangles in graphs with no wheels of order at least five }
\author{
Chunyang Dou\thanks{School of Mathematical Sciences, Anhui University, Hefei 230601, P. R. China. Email: \texttt{chunyang@stu.ahu.edu.cn}. Supported in part by the National Natural Science Foundation of China (No. 12471319) and Anhui Provincial Department of Education Research Project (No.~2025AHGXZK50154).}
\and Bo Ning\thanks{College of Computer Science, Nankai University, Tianjin 300350, P. R. China. Email: \texttt{bo.ning@nankai.edu.cn}. Supported in part by the National Natural Science Foundation of China (No. 12371350) and the Fundamental Research Funds for the Central Universities (No. 63243151).}
\and Xing Peng\thanks{Center for Pure Mathematics, School of Mathematical Sciences, Anhui University, Hefei 230601, P. R. China. Email: \texttt{x2peng@ahu.edu.cn}. Supported in part by the National Natural Science Foundation of China (No. 12471319) and Excellent University Research and Innovation Team in Anhui Province (No. 2025AHGXZK10041).}
}
\date{}

\begin{document}
\maketitle

\begin{abstract}
For a family of graphs $\mathcal F$, a graph $G$ is said to be $\mathcal F$-free if it contains no member of $\mathcal F$ as a subgraph. A wheel graph $W_k$ is a graph on $k+1$ vertices formed by joining a new vertex to all vertices of a $k$-cycle. Given an integer $k\ge 3$, we consider the problem of determining the maximum number of triangles in a $\Wge{k}$-free graph, where $\Wge{k}=\{W_\ell: \ell \geq k\}$. The case $k=3$ was raised by Gallai, who proposed a conjecture for this case (see Erd\H{o}s \cite{E88}). Gallai's conjecture was disproved by Zhou \cite{Zhou} and independently by F\"uredi, Goemans, and Kleitman \cite{FGK}. In this paper, we study the case $k=4$. Namely, for every  integer $n\ge 3$, we determine the maximum number of triangles in an $n$-vertex $\Wge{4}$-free graph and characterize all extremal graphs. We prove that for every integer $n\ge 3$,
$\ex(n,K_3,\Wge{4})=h(n),$
where \[
h(n):=
\begin{cases}
\dfrac{n^2}{4}, & n\equiv 0 \pmod 4,\\[2mm]
\dfrac{n^2}{4}-1, & n\equiv 2 \pmod 4,\\[2mm]
\left\lfloor\dfrac{n^2-n}{4}\right\rfloor, & n\equiv 1,3 \pmod 4.
\end{cases}
\]

\medskip
\noindent\textbf{Keywords:} Tur\'an number; triangle; wheel.
\end{abstract}

\section{Introduction}

Tur\'an problems are among the central themes of extremal combinatorics. Given a family of graphs $\mathcal F$, if a graph $G$ does not contain a member of $\mathcal F$ as a subgraph, then we say that $G$ is $\mathcal F$-free. Typically, Tur\'an problems ask, for a given graph $T$ and a family of forbidden subgraphs $\mathcal F$, what is the maximum number of copies of $T$ in an $n$-vertex $\mathcal F$-free graph. This maximum is the generalized Tur\'an number, denoted by $\ex(n,T,\mathcal F)$. If an $\mathcal F$-free graph $G$ contains $\ex(n,T,\mathcal F)$ copies of $T$, then we say that $G$ is an {\it extremal graph}. For $T=K_2$, this is the classical Tur\'an number $\ex(n,\mathcal F)$. When $\mathcal{F}=\{F\}$, we write $\ex(n,T,F)$ and $\ex(n,F)$.

Mantel's theorem, which gives $\ex(n,K_3)$, is perhaps the earliest result in the study of Tur\'an numbers of graphs. Tur\'an's theorem \cite{Turan} famously determines $\ex(n,K_r)$ for all $r$ and is widely regarded as the starting point of extremal graph theory. A landmark result in this area is the Erd\H{o}s--Stone--Simonovits theorem \cite{ES,ES1}, which provides an asymptotic formula for $\ex(n,\mathcal F)$:
\[
\ex(n,\mathcal F)=\left(1-\frac{1}{p(\mathcal F)}+o(1)\right)\binom n2,
\]
where $p(\mathcal F)=\min\{\chi(F)-1:F\in\mathcal F\}$. If $\mathcal F$ contains a bipartite graph, then $p(\mathcal F)=1$ and the family is called degenerate. The Erd\H{o}s--Stone--Simonovits theorem then gives only $\ex(n,\mathcal F)=o(n^2)$, and determining the order of magnitude of $\ex(n,\mathcal F)$ for a degenerate family is generally challenging.

Several papers deal with the function $\ex(n,T,H)$ for $T\ne K_2$. The first is due to Erd\H{o}s \cite{E62}, who determined $\ex(n,K_t,K_r)$ for all $t<r$; see also Bollob\'as \cite{BB} for extensions. Perhaps the most natural subgraph to count, after edges, is the triangle. Let $F_k$ be the graph consisting of $k$ triangles sharing a common vertex. To our knowledge, Erd\H{o}s and S\'os \cite{ESos} initiated the study of counting triangles in $F_k$-free graphs; in particular, they determined $\ex(n,K_3,F_2)$. In an influential paper, Alon and Shikhelman \cite{AS} proved an upper bound on $\ex(n,K_3,F_k)$ for all $k$. Zhu, Chen, Gerbner, Gy\H{o}ri, and Karim \cite{Zhu} obtained the exact value of $\ex(n,K_3,F_k)$ and characterized the extremal graphs for $n\ge 4k^3$.

For a graph $G$, let $t(G)$ denote the number of triangles in $G$. Given a vertex $v\in V(G)$, let $\Nbr(v)=\{u:u\text{ is adjacent to }v\}$ be the {\it neighborhood} of $v$ and let $\dG(v)=|\Nbr(v)|$ be its {\it degree}. We shall repeatedly use the identity
\begin{equation}\label{eq:triangle-neighborhood}
 t(G)=\frac13\sum_{v\in V(G)} e(G[\Nbr(v)]),
\end{equation}
where $e(G[\Nbr(v)])$ is the number of edges in the graph induced by $\Nbr(v)$. Therefore, if one imposes a constraint on the neighborhood of each vertex, then one may obtain an upper bound on $t(G)$. In the study of $\ex(n,K_3,F_k)$, the graph $G[\Nbr(v)]$ is $M_k$-free, where $M_k$ is a matching with $k$ edges. More generally, given a graph $F$, let $\widehat F$ denote the graph obtained from $F$ by joining a new vertex to every vertex of $F$. Then
\[
 \ex(n,K_3,\widehat F)
 \le \frac13\sum_{v\in V(\Gamma)} \ex(d_{\Gamma}(v),F),
\]
 where $\Gamma$ is an extremal graph for $\ex(n,K_3,\widehat F)$.
Thus, in studying $\ex(n,K_3,\widehat F)$, it is natural to choose graphs $F$ for which the Tur\'an number $\ex(n,F)$ is well understood.

Recall that in the influential paper \cite{EG59}, Erd\H{o}s and Gallai determined the Tur\'an number $\ex(n,M_k)$ and proved tight upper bound for $\ex(n,P_k)$ and $\ex(n, C_{\geq k})$. Here $C_{\geq k}$ is the set of cycles with length at least $k$.
Since $\ex(n,K_3,F_k)$ is known for large $n$ and $F_k=\widehat M_k$, one may ask for $\ex(n,K_3,\widehat P_k)$. Mubayi and Mukherjee \cite{MM} first proved the asymptotic value of $\ex(n,K_3,\widehat P_k)$ for $k=4,5,6$. For $k\ge 7$, they conjectured that
\[
\ex(n,K_3,\widehat P_k)=\left\lfloor\frac{k-2}{2}\right\rfloor \frac{n^2}{8}+o(n^2).
\]
This conjecture remains open for $k\geq 7$. Later, Mukherjee \cite{M} determined the exact value of $\ex(n,K_3,\widehat P_4)$ for all $n$. Using a stability theorem for the Tur\'an problem of a non-bipartite graph, Hei, Hou, and Ma \cite{HHM} recently showed the exact value of $\ex(n,K_3,\widehat P_5)$ for sufficiently large $n$.

 A wheel graph $W_k$ is a graph on $k+1$ vertices formed by joining a new vertex to all vertices of a $k$-cycle. Notice that $\widehat C_k$ is the wheel graph $W_k$. Let $\Wge{k}$ be the set of wheels on at least $k+1$ vertices. Following the study of Erd\H{o}s and Gallai \cite{EG59}, it is natural to ask the following general problem.

\begin{problem}\label{prob:main}
For a fixed integer $k\ge 3$, determine the value of $\ex(n,K_3,\Wge{k})$.
\end{problem}

For the case $k=3$, Gallai, as reported by Erd\H{o}s \cite{E88} and independently Zelinka \cite{Zelinka} proposed the following conjecture.

\begin{conjecture}\label{conj:gallai}
For $n\ge 4$, we have
\[
\ex(n,K_3,\Wge{3})=\left\lfloor \frac{n^2}{8}\right\rfloor.
\]
\end{conjecture}

Note that if $G$ contains no wheels, then the neighborhood of each vertex in $G$ is a forest. Zhou \cite{Zhou} disproved Gallai's conjecture by constructing a $\Wge{3}$-free graph with $(n^2+n)/8$ triangles whenever $n$ is of the form $8q+7$. He also showed $\ex(n,K_3,\Wge{3})\le (n^2-n)/6$. Gallai's conjecture was also disproved by F\"uredi, Goemans, and Kleitman \cite{FGK}, who exhibited graphs containing $n^2/7.5$ triangles. Moreover, they proved the upper bound $\ex(n,K_3,\Wge{3})\le n^2/7.02+O(n)$. Gallai's question is notably hard and remains open.

In this paper,
we address the next case of
Problem~\ref{prob:main}, namely $k=4$.  We determine the exact value of $\ex(n,K_3,\Wge{4})$ and characterize all extremal graphs. In a $\Wge{4}$-free graph, the only possible cycle in the neighborhood of a vertex is a triangle.

We now introduce the extremal construction. For positive integers $a$ and $b$, let $H_{a,b}$ be the graph obtained from the complete bipartite graph $K_{a,b}$ by adding a maximum matching in each part. For $n\le 4$, set $H(n)=K_n$. For $n\ge 5$, define
\[
H(n):=
\begin{cases}
H_{2m,2m}, & n=4m,\\
H_{2m,2m+1}, & n=4m+1,\\
H_{2m,2m+2}, & n=4m+2,\\
H_{2m+1,2m+2}, & n=4m+3.
\end{cases}
\]
Observe that the neighborhood of each vertex of $H(n)$ may contain a triangle but contains no  cycle of length at least four. Thus $H(n)$ is $\Wge{4}$-free. Let
\[
h(n):=
\begin{cases}
\dfrac{n^2}{4}, & n\equiv 0 \pmod 4,\\[2mm]
\dfrac{n^2}{4}-1, & n\equiv 2 \pmod 4,\\[2mm]
\left\lfloor\dfrac{n^2-n}{4}\right\rfloor, & n\equiv 1,3 \pmod 4.
\end{cases}
\]
It is straightforward to verify that $t(H(n))=h(n)$. Our main result is the following theorem.

\begin{theorem}\label{main}
For every integer $n\ge 3$,
\[
\ex(n,K_3,\Wge{4})=h(n).
\]
Moreover, $H(n)$ is the unique extremal graph.
\end{theorem}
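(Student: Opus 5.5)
We argue by induction on $n$, treating $3\le n\le 8$ directly by a finite case check. The base tool is the identity \eqref{eq:triangle-neighborhood} combined with a structural description of neighborhoods. In a $\Wge{4}$-free graph $G$, every $G[\Nbr(v)]$ contains no cycle of length at least four. Hence each block of $G[\Nbr(v)]$ is an edge or a triangle, so $G[\Nbr(v)]$ is a cactus-like graph whose cycles are all triangles. Such a graph on $d$ vertices has at most $\lfloor 3(d-1)/2\rfloor$ edges. This gives $t(G)\le \frac13\sum_v \lfloor 3(\dG(v)-1)/2\rfloor \le \frac12\sum_v(\dG(v)-1)=e(G)-n/2$. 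This bound alone is too weak, since $e(G)$ can be as large as $\binom n2$ a priori. So the argument must also exploit that dense neighborhoods force wheels.

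For the inductive step we look for a small deletable configuration. First take an edge $uv$ and let $\operatorname{codeg}(uv)=|\Nbr(u)\cap\Nbr(v)|$. In $G[\Nbr(u)]$ the vertex $v$ has degree $\operatorname{codeg}(uv)$, and the common neighbours of $u,v$ span at most a matching. Indeed, two adjacent common neighbours $x,y$ and a third $z$ adjacent to $x$ would give a $P$ that, together with $v$, closes a $4$-cycle in $\Nbr(u)$. Moreover no two triangle-blocks can share $v$ in a way creating longer cycles.

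Next, pick a pair of adjacent vertices $u,v$ and delete both, so that $n\mapsto n-2$. The number of triangles destroyed equals $\operatorname{codeg}(uv)+t_u+t_v$, where $t_u$ counts triangles at $u$ avoiding $v$. Using the cactus bound, triangles at $u$ number at most $\lfloor 3(\dG(u)-1)/2\rfloor/1$ edge-count of $G[\Nbr(u)]$. One must show that some edge has $t(\text{lost})\le h(n)-h(n-2)$, which equals $n-1$ or $n$ depending on the residue. Alternatively, delete four vertices inducing a $K_4$ (a matched pair in each part of $H(n)$). In that case $h(n)-h(n-4)=2n-4$, while the $K_4$ itself carries $4$ triangles and each outside vertex lies in at most about $2$ triangles with it. The $K_4$ removal fits $H(n)$ perfectly, since there every outside vertex sees exactly two adjacent vertices of the $K_4$ forming one triangle each. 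So I would try: if $G$ contains $K_4$, delete a $K_4$ $Q$. A vertex $x\notin Q$ adjacent to $3$ vertices of $Q$ yields $W_4$? With centre $q\in Q$, $\Nbr(q)\supseteq Q\setminus q\cup\{x\}$ and $x$ adjacent to two of those gives a $4$-cycle in $\Nbr(q)$, so yes, a wheel. Hence each outside vertex has at most $2$ neighbours in $Q$, and so creates at most $1$ triangle with two $Q$-vertices. Triangles with exactly one vertex in $Q$ must also be bounded, by $\sum_{q}e(G[\Nbr(q)\setminus Q])$. This is where the counting is delicate, and I would bound it by applying the cactus structure to each $\Nbr(q)$, in which $Q\setminus q$ is already a triangle block.

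If $G$ is $K_4$-free, each neighbourhood is a forest. Then $t(G)\le\frac13\sum_v(\dG(v)-1)$, and by Mantel-type reasoning (the triangle-free structure of neighborhoods limits $e(G)$) this should fall below $h(n)$ comfortably, giving Zhou's bound $(n^2-n)/6<h(n)$. I would cite/reprove that Zhou's argument gives $\ex(n,K_3,\Wge{3})\le(n^2-n)/6$. For uniqueness, equality must hold throughout the induction. That forces $G-Q=H(n-4)$ and every outside vertex to see exactly an edge of $Q$ with the right pattern, and a short check reconstructs $H(n)$. The main obstacle is bounding triangles meeting $Q$ in one vertex by $2n-4-4-(\text{two-vertex triangles})$. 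There I expect to need a choice of $Q$ that is extremal, for example one minimizing the total degree, together with averaging over all $K_4$'s.
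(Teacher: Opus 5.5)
\textbf{Main gap: the $K_4$-touching bound.} Your skeleton is exactly the paper's: delete a $K_4$ $Q=\{u_1,\dots,u_4\}$, use $d_Q(x)\le 2$, and induct via $t(G)=t(G-Q)+\tau(Q)$. But the step that carries all the weight, $\tau(Q)\le h(n)-h(n-4)$, is left as ``the main obstacle.'' Moreover, the target you state is wrong for odd $n$: $h(n)-h(n-4)=2n-4$ only for even $n$, and it equals $2n-5$ for odd $n$.

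For the even target you need no extremal choice of $Q$ and no averaging; the cactus bound you already have, applied at each $u_i$, suffices. Let $\alpha_j$ be the number of outside vertices with exactly $j$ neighbours in $Q$. Let $s_i$ be the number whose only $Q$-neighbour is $u_i$, $p_i$ the number with two $Q$-neighbours including $u_i$, and $t_i$ the number of triangles meeting $Q$ only in $u_i$. Then $e(G[\Nbr(u_i)])=3+p_i+t_i$ and $\dG(u_i)=3+s_i+p_i$, so the bound $\frac32(\dG(u_i)-1)$ gives $t_i\le\frac32 s_i+\frac12 p_i$. Summing, $\tau(Q)\le 4+\frac32\alpha_1+2\alpha_2=2n-4-2\alpha_0-\frac12\alpha_1$.

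For odd $n$, integrality gives $2n-5$ only when $\alpha_0+\alpha_1>0$. When every outside vertex is a double you need a further argument that your proposal does not contain. Adjacent doubles with labels $\{u_i,u_j\}$ and $\{u_i,u_k\}$, $j\ne k$, create a $C_4$ in $\Nbr(u_i)$. Also, each class $X_{ij}$ of doubles with label $\{u_i,u_j\}$ spans a matching. Hence only edges inside a class lie in one-vertex triangles, two each, giving at most $2\sum\lfloor |X_{ij}|/2\rfloor\le n-5$ such triangles, since $n-4$ is odd. Without this the induction breaks at every odd $n$.

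\textbf{Uniqueness.} Your uniqueness sketch also has a real hole. For odd $n$, equality forces only $\alpha_0=0$ and $\alpha_1\le 2$, because $2n-4-\alpha_1/2$ still rounds down to $2n-5$. So equality does not force every outside vertex to see an edge of $Q$, and excluding these singletons is the bulk of the paper's work. The paper uses:
\begin{itemize}
\item $G-Q\cong H(n-4)$, with its complete bipartite pair $(A,B)$;
\item a further label restriction: a singleton at $u_i$ has at most one double neighbour whose label contains $u_i$, else $\Nbr(u_i)$ contains a $C_4$ or $C_5$;
\item a lemma forcing all labels on $A$ into some pair $P$ and all labels on $B$ into $Q\setminus P$;
\item the resulting count of one-vertex triangles, at most $n-5<n-5+\alpha_1$.
\end{itemize}
The cases $n=7,9,11$ need separate hand analysis, because $A$ and $B$ are too small for that lemma.

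For even $n$ the paper sidesteps induction, and here you discarded a usable tool. The inequality $t(G)\le e(G)-n/2$ (after excluding isolated vertices) is not too weak once paired with the Dzido--Jastrz\k{e}bski theorem. Indeed, $t(G)=h(n)$ then forces $e(G)\ge h(n)+n/2=\ex(n,W_4)$, and their characterization of the $W_4$-extremal graphs, plus a triangle count, leaves only $H(n)$. Your $K_4$-free case via Zhou's bound is fine, since such graphs are wheel-free.
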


For $n\le4$, the graph $H(n)$ is complete and the assertion is immediate. Thus the lower bound follows from the construction $H(n)$, and it remains to prove the upper bound $\ex(n,K_3,\Wge{4})\le h(n)$ and the uniqueness of the extremal graph for $n\ge 5$.

\begin{notation}
Let $G$ be a graph. For a vertex $x\in V(G)$ and a set $U\subseteq V(G)$, let $N_U(x)=\Nbr(x)\cap U$ and $d_U(x)=|N_U(x)|$.
\end{notation}

\section{Reduction and proof sketch}\label{sec:sketch}

In this section, we introduce the two statements that imply Theorem~\ref{main}, and then give a sketch for their proofs. This mirrors the structure of the proof: first a deletion argument gives the sharp upper bound, and then a more delicate equality analysis identifies the unique extremal graph.

We shall prove the following two propositions in Section~\ref{sec:proof-main}.

\begin{proposition}[Upper bound]\label{prop:upper}
For every integer $n\ge 1$, every $n$-vertex $\Wge{4}$-free graph $G$ satisfies
\[
 t(G)\le h(n).
\]
\end{proposition}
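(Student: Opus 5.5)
We argue by induction on $n$, using a deletion argument. For $n\le 4$ the bound $t(G)\le\binom n3\le h(n)$ is checked directly: $h(1)=h(2)=0$, $h(3)=1$, $h(4)=4$.

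For the induction step, take $n\ge5$ and an $n$-vertex $\Wge{4}$-free graph $G$. The key structural fact is local. For each vertex $v$, the link $L_v=G[\Nbr(v)]$ contains no cycle of length at least four, since such a cycle together with $v$ would be a wheel $W_\ell$ with $\ell\ge4$. So every block of $L_v$ is an edge or a triangle. A connected graph on $c$ vertices whose blocks are edges and triangles has at most $\lfloor 3(c-1)/2\rfloor$ edges. Summing over components gives
\[
e(L_v)\le \left\lfloor\tfrac32(d_G(v)-1)\right\rfloor \quad\text{whenever } d_G(v)\ge1 .
\]
The number of triangles through $v$ is exactly $e(L_v)$. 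Deleting $v$ therefore gives
\[
t(G)\le t(G-v)+\left\lfloor\tfrac32(d_G(v)-1)\right\rfloor \le h(n-1)+\left\lfloor\tfrac32(d_G(v)-1)\right\rfloor .
\]
A single-vertex deletion is not sharp in general. The increments $h(n)-h(n-1)$ are $(n-1)/2$ or $(n-2)/2$ or $n/2$, depending on the residue of $n$ modulo $4$. This means we want a vertex with $d_G(v)\lesssim n/3$, which need not exist: in $H(n)$ every degree is about $n/2$.

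So we delete a suitable pair or triangle instead. The extremal graph $H(n)$ has the vertices of each part's matching as natural deletion units: removing a matched pair takes $n$ to $n-2$. The increment $h(n)-h(n-2)$ is roughly $n-1$, adjusted by the residue class. The plan is to pick an edge $uv$ with the fewest triangles through $\{u,v\}$ and bound
\[
t(G)-t(G-u-v)=e(L_u)+e(L_v)-|N(u)\cap N(v)| .
\]
Here the subtracted term is the number of triangles containing the edge $uv$, since these are counted twice. Using $e(L_u)\le \tfrac32(d(u)-1)$, the key issue is to show that some edge has $\tfrac32(d(u)+d(v))-\operatorname{codeg}(u,v)$ small, of order $n$.

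First I will double count over all edges:
\[
\sum_{uv\in E}\bigl(d(u)+d(v)\bigr)=\sum_v d(v)^2 ,\qquad \sum_{uv\in E}\operatorname{codeg}(u,v)=3t .
\]
I will combine this with $3t\le \sum_v \tfrac32(d(v)-1)$. This averaging gives an inequality relating $t$, $e$ and $\sum d^2$. The intended route is then either to find an edge whose contribution is at most $h(n)-h(n-2)$, or to derive directly that $t\le h(n)$.

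I expect the main obstacle to be this averaging step together with the exact residue bookkeeping mod $4$. The link bound $\tfrac32(d-1)$ is only achieved when the link is a "triangle cactus". In $H(n)$, however, links are a matching plus one triangle, giving about $d/2+O(1)$ edges, so pure local counting is far from sharp.

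The missing ingredient is a global constraint. I will try to show that if $uv$ lies in many triangles, then the common neighbourhood $N(u)\cap N(v)$ is nearly independent, since two adjacent common neighbours $x,y$ together with further structure would create a $W_4$. More precisely, I want to show that each vertex $w$ has at most $\tfrac12 d(w)+1$ edges in its link unless small configurations appear. This sharper link bound, valid outside a few exceptional vertices, gives the count $t\le\frac13\sum_v (d(v)/2+1)\approx e/6+n/3$, and with $e\le n^2/2$ this is too weak.

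Hence I will instead run induction on removing a vertex of minimum "triangle degree". If every vertex lies in more than $h(n)-h(n-1)\approx n/2$ triangles, I will argue that the links force a near-bipartite structure: links being cactus-like with many edges means high degree, and then I will bound the count using $\sum_v e(L_v)=3t$ together with a $K_4$-type counting argument. I expect this case, of large minimum triangle degree, to be the hardest. There I would first try showing that $G$ contains two disjoint large sets between which almost all triangles run, as in $H(n)$.
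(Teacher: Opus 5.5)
\textbf{Verdict: there is a genuine gap.} Your proposal does not contain a proof of the induction step. Each route you open is either abandoned or left as ``I will argue'':
\begin{itemize}
\item single-vertex deletion;
\item edge deletion with averaging over $\sum_v \dG(v)^2$ and codegrees;
\item a sharper link bound;
\item deleting a vertex of minimum triangle degree.
\end{itemize}
The case you yourself call the hardest, where all triangle degrees are large, gets no argument at all.

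Moreover, the diagnosis that led you away from local counting is wrong. In $H(n)$, take a matched vertex $x$ of part $A$ with partner $x'$. Its link is $x'$ joined to all of $B$, plus the matching on $B$. This is a friendship graph with $|B|+\lfloor |B|/2\rfloor$ edges, not ``a matching plus one triangle''. For $n\equiv 0\pmod 4$ this equals $\frac32(\dG(x)-1)$ at every vertex. So your bound $e(L_v)\le\frac32(\dG(v)-1)$ is sharp on the extremal graph. The ``sharper'' bound $e(L_w)\le\frac12\dG(w)+1$ that you hope to prove is false; already $K_4$ violates it, since $3>5/2$.

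Also, $h(n)-h(n-1)$ is not about $n/2$. It alternates between about $n/4$ and $3n/4$: for example $h(4m+1)-h(4m)=m$ and $h(4m+2)-h(4m+1)=3m$. This is a hint that the natural deletion step has size $4$, not $1$ or $2$.

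The missing idea is to delete a copy $U$ of $K_4$. Since $h(n)-h(n-4)$ equals $2n-4$ for even $n$ and $2n-5$ for odd $n$, it suffices to bound the number $\tau(U)$ of triangles meeting $U$.

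\begin{itemize}
\item \emph{Structure from $\Wge{4}$-freeness.} An outside vertex $x$ with three neighbours $u_1,u_2,u_3$ in $U$ would create the $4$-cycle $u_2u_4u_3x$ in $G[\Nbr(u_1)]$. Hence every outside vertex has $d_U(x)\le 2$.
\item \emph{Local counting at $U$ only.} Let $s_i$ and $p_i$ count the outside vertices whose neighbourhood in $U$ is $\{u_i\}$, respectively a pair containing $u_i$. Let $t_i$ count triangles meeting $U$ only in $u_i$. Your own link bound, applied only at the four vertices of $U$, gives $3+p_i+t_i\le\frac32(2+s_i+p_i)$. Summing yields $\tau(U)\le 2n-4-2\alpha_0-\alpha_1/2$, where $\alpha_j$ is the number of outside vertices with $j$ neighbours in $U$.
\item \emph{Odd $n$ with $\alpha_0=\alpha_1=0$.} Here a parity argument is needed. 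Outside edges whose ends share a neighbour in $U$ lie inside the six classes $X_{ij}$. Each $X_{ij}$ induces a matching, and some $X_{ij}$ has odd size.
\item \emph{The $K_4$-free case.} This is handled separately. All links are forests, so $3t(G)\le 2e(G)-3$. Tur\'an's bound $e(G)\le\lfloor n^2/3\rfloor$ then gives $t(G)<h(n)$.
\end{itemize}

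Without some rigid substructure like this to delete, or a real stability argument for your minimum-triangle-degree case, the induction does not close.
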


\begin{proposition}[Extremal graphs]\label{prop:extremal}
Let $n\ge 3$, and let $G$ be an $n$-vertex $\Wge{4}$-free graph with $$t(G)=h(n).$$ Then $G\cong H(n)$.
\end{proposition}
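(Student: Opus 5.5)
The plan is to prove Proposition~\ref{prop:extremal} by induction on $n$. The input is the sharp bound of Proposition~\ref{prop:upper} together with the deletion argument that proves it. The base cases $n\le 4$ hold because $h(n)=\binom n3$ there, so only $K_n$ attains the value. A finite check covers small $n$ (say $n\le 8$), where parity effects make the general step fragile.

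\textbf{Step 1 (local structure).} Let $G$ be $\Wge{4}$-free. For each vertex $v$, the graph $G[\Nbr(v)]$ has no cycle of length at least $4$. Hence every block of it is an edge or a triangle, so
\[
e(G[\Nbr(v)])\le \left\lfloor \tfrac{3(\dG(v)-1)}{2}\right\rfloor .
\]
The number of triangles through $v$ is $t_v=e(G[\Nbr(v)])$. Deleting $v$ gives $t(G)=t(G-v)+t_v\le h(n-1)+t_v$.

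\textbf{Step 2 (a light vertex).} Take a vertex $v$ that minimizes $t_v$. Since $t(G)=h(n)$, we have $t_v\le 3h(n)/n$. The differences $h(n)-h(n-1)$ are roughly $n/2$, with the exact value depending on $n \bmod 4$. In the proof of Proposition~\ref{prop:upper}, one presumably shows that some vertex satisfies $t_v\le h(n)-h(n-1)$. Equality $t(G)=h(n)$ then forces two things: $t(G-v)=h(n-1)$, and $t_v$ equals the difference exactly. By induction, $G-v\cong H(n-1)$.

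\textbf{Step 3 (reinsertion).} It remains to show that adding a vertex $v$ to $H(n-1)=H_{a,b}$ with exactly $h(n)-h(n-1)$ new triangles, while staying $\Wge{4}$-free, yields $H(n)$. The neighbourhood $S=N(v)$ must induce a graph in $H_{a,b}$ whose blocks are edges or triangles. Also, $v$ must not complete a long wheel centred at another vertex $u$: adding $v$ into $N(u)$ must not create a cycle of length at least $4$ there. In $H_{a,b}$, $N(u)$ is the other part together with $u$'s matching partner, and it induces a perfect (or near-perfect) matching plus a universal vertex, i.e.\ a "fan of triangles". Adding $v$ adjacent to two vertices $x,y\in N(u)$ that lie in different triangles, or to two vertices of one part that are not matched, creates a $C_{\ge4}$ in $N(u)$. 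This restricts $S$ to essentially a union of one part's vertices plus matching partners. Counting the edges of $G[S]$ against the required value then forces $v$ to join the smaller part, adjacent to the whole opposite part and matched to an unmatched vertex. The result is $H(n)$.

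\textbf{Main obstacle.} The hard step is Step 2. For $n\equiv 1,3\pmod 4$ the increments $h(n)-h(n-1)$ alternate, and the average bound $3h(n)/n$ may exceed the needed increment. Then a vertex of minimum $t_v$ need not give equality in the induction. I would first try deleting two vertices (an edge $uv$ or a matched pair) and comparing with $h(n-2)$. For these residues this gap is uniform: $h(n)-h(n-2)=n-1$ or $n-2$. I would bound $t_u+t_v-t_{uv}$ using the block structure from Step 1. Step 3 then becomes a two-vertex reinsertion, handled the same way.
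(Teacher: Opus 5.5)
Your Step 2 is a genuine gap, and the whole induction rests on it. The paper's proof of Proposition~\ref{prop:upper} does not produce a vertex with $t_v\le h(n)-h(n-1)$. It deletes a copy $U$ of $K_4$, which exists by Lemma~\ref{lem:noK4}, and uses Lemma~\ref{lem:touching}, $\tau(U)\le h(n)-h(n-4)$, which holds for \emph{every} $K_4$.

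A one-vertex version is out of reach by averaging or local counting when $n$ is odd. The increments are not ``roughly $n/2$'': $h(4m)-h(4m-1)=h(4m+2)-h(4m+1)=3m$, while $h(4m+1)-h(4m)=m$ and $h(4m+3)-h(4m+2)=m+1$. So for odd $n$ you need a vertex lying in at most $\lfloor (n+1)/4\rfloor$ triangles, whereas the average $3h(n)/n$ is about $3n/4$. In $H(n)$ itself the only such vertex is the one vertex missed by the two matchings. Exhibiting it in an arbitrary $G$ with $t(G)=h(n)$ is essentially as hard as the proposition.

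The failure at odd $n$ propagates to all $n$. Your case $n=4m$, where averaging does work, reduces to $H(4m-1)$, which is odd. For $n\equiv 2\pmod 4$, averaging gives only $\min_v t_v\le 3m+1$, one more than needed.

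The two-vertex fallback has the same defect. In $H(n)$ a typical edge, a cross edge between matched vertices, has $t_u+t_v-t_{uv}\approx 3n/2$. That is far above $h(n)-h(n-2)\in\{n-1,n-2\}$. Only $O(n)$ special edges meet the bound, and you give no mechanism for locating one in an unknown $G$.

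The missing idea is a canonical substructure whose deletion has a sharp, provable increment, and the $K_4$ is exactly that. Lemma~\ref{lem:label}(i) shows every outside vertex sees at most two vertices of $U$. Erd\H{o}s--Gallai applied in each $G[\Nbr(u_i)]$ then gives $\tau(U)\le 2n-4-2\alpha_0-\alpha_1/2$, with a parity argument when $n$ is odd.

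For odd $n$, equality forces $G-U\cong H(n-4)$, $\alpha_0=0$ and $\alpha_1\le 2$. The case analysis built on Lemma~\ref{lem:bip-label} then excludes singletons.

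For even $n$ no induction is needed. Summing $e(G[\Nbr(v)])\le 3(\dG(v)-1)/2$ gives $e(G)\ge h(n)+n/2=\ex(n,W_4)$. So $G$ is $W_4$-extremal, and Theorem~\ref{thm:W4} plus a triangle count finishes.

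Your Step 3 reinsertion would be a reasonable last step, but it cannot be reached until Step 2 is replaced by an argument of this kind.
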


\begin{proof}[Proof of Theorem~\ref{main} (assuming Propositions~\ref{prop:upper} and \ref{prop:extremal})]
The graph $H(n)$ is $\Wge{4}$-free and satisfies $t(H(n))=h(n)$. Thus
$\ex(n,K_3,\Wge{4})\ge h(n).$
Proposition~\ref{prop:upper} gives the reverse inequality. If equality holds, Proposition~\ref{prop:extremal} gives $G\cong H(n)$, and hence the extremal graph is unique.
\end{proof}

\paragraph{Proof sketch.}
The central technical part of the paper is the proof of Proposition~\ref{prop:upper}. Let $G$ be an $n$-vertex $\Wge{4}$-free graph. If $G$ contains no $K_4$, then every neighborhood $G[\Nbr(v)]$ is both $\Cge{4}$-free and triangle-free, hence it is a forest. Combining the  identity \eqref{eq:triangle-neighborhood} and Tur\'an's theorem for $K_4$-free graphs, we  can show $t(G)<h(n)$. Thus any extremal graph must contain a copy $U$ of $K_4$.

Once such a $K_4$ is fixed, we will show every vertex outside $U$ can have at most two neighbors in $U$, and the possible neighborhoods in $K_4$ of adjacent outside vertices are strongly restricted; this is Lemma~\ref{lem:label}. Let $\tau(U)$ be the number of triangles intersecting $U$. The key estimate is the $K_4$-touching bound, Lemma~\ref{lem:touching}, which states that
\[
 \tau(U)\le
 \begin{cases}
 2n-4, & n\text{ even},\\
 2n-5, & n\text{ odd}.
 \end{cases}
\]
By Fact~\ref{fact:diff}, this upper bound is exactly $h(n)-h(n-4)$. Deleting $U$ and applying induction gives
\[
 t(G)=t(G-U)+\tau(U)\le h(n-4)+h(n)-h(n-4)=h(n),
\]
which proves the upper bound.

It remains to discuss the equality case. When $n$ is even, equality in the neighborhood estimate forces $G$ to be an extremal $W_4$-free graph for the classical Tur\'an problem. The  theorem of Dzido and Jastrz\k{e}bski then identifies the graph, and it must be $H(n)$ by counting triangles. When $n$ is odd, the argument is inductive. Equality in the deletion step implies that $G-U\cong H(n-4)$ and there are only a small number of vertices outside $U$ with exactly one neighbor in $U$ (which are called singletons). If there are singletons, then we will show the number of triangles intersecting $U$ in exactly one vertex is strictly less than what we expect as it is related to the number of singletons. This contradiction implies that singletons do not exist and we can show the graph is exactly  $H(n)$.

\section{Preliminaries}\label{sec:prelim}

We start with a simple fact.

\begin{fact}\label{fact:diff}
For every integer $n\ge 5$,
\[
 h(n)-h(n-4)=
 \begin{cases}
 2n-4, & n\text{ even},\\
 2n-5, & n\text{ odd}.
 \end{cases}
\]
\end{fact}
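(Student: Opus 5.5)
The plan is a direct case check on the residue of $n$ modulo $4$. The point is that $n$ and $n-4$ have the same residue, so both $h(n)$ and $h(n-4)$ come from the same branch of the definition. We write $n=4m+r$ with $r\in\{0,1,2,3\}$; the hypothesis $n\ge 5$ gives $n-4\ge 1$, so $h(n-4)$ is defined by the formula.

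For the even residues the computation is immediate. For $r=0$ we have $h(n)-h(n-4)=\frac{n^2-(n-4)^2}{4}=\frac{8n-16}{4}=2n-4$. For $r=2$ the two $-1$ terms cancel and the same difference $2n-4$ results.

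For the odd residues the only step needing care is the floor. We plan to remove it by evaluating $\frac{n^2-n}{4}$ exactly in each residue class:
\begin{itemize}
\item if $n=4m+1$, then $n^2-n=(4m+1)(4m)$ is divisible by $4$, so $h(n)=m(4m+1)$;
\item if $n=4m+3$, then $n^2-n=(4m+3)(4m+2)=16m^2+20m+6$, so $h(n)=4m^2+5m+1$.
\end{itemize}
In both cases $h(n)=\frac{n^2-n-c}{4}$ with a constant $c\in\{0,2\}$ that depends only on the residue. This constant therefore cancels in the difference, giving
\[
h(n)-h(n-4)=\frac{(n^2-n)-((n-4)^2-(n-4))}{4}=\frac{8n-20}{4}=2n-5 .
\]

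The one spot to double-check is the small boundary values, where $H(n-4)=K_{n-4}$ was defined separately. This does not matter, because the fact concerns $h$ as defined by the formula. Still, we will sanity-check $h(1)=0$, $h(2)=0$, $h(3)=1$ and $h(4)=4$ against the formula; these values are also consistent with $t(K_{n-4})$, which is what the induction later in the paper needs. There is no real obstacle here.
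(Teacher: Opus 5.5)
Your proof is correct and is essentially the paper's approach: the paper states this fact without proof as a routine check, and your residue-by-residue computation is exactly that check. The key observation is that $n$ and $n-4$ have the same residue, so in the odd case the floor resolves to $h(n)=(n^2-n-c)/4$ with the same constant $c$ for both terms. Your values $h(1),\dots,h(4)$ also cover the base cases the later induction needs.
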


Let $\Cge{k}$ be the set of cycles on at least $k$ vertices. We shall use the following classical result of Erd\H{o}s and Gallai.

\begin{theorem}[Erd\H{o}s--Gallai \cite{EG59}]\label{thm:EG}
For $n\ge k\ge 3$, we have
\[
 \ex(n,\Cge{k})\le \frac{(k-1)(n-1)}{2}.
\]
\end{theorem}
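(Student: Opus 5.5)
We prove the slightly more general statement that every $\Cge{k}$-free graph $G$ on $n\ge 1$ vertices satisfies $e(G)\le \frac{(k-1)(n-1)}{2}$, by induction on $n$. Allowing all $n\ge1$ makes the induction self-contained.

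\emph{Base case $n\le k-1$.} Here the trivial bound $e(G)\le \binom n2=\frac{n(n-1)}{2}\le \frac{(k-1)(n-1)}{2}$ already suffices.

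\emph{Reduction to $2$-connected graphs.} Now let $n\ge k$. First suppose $G$ is disconnected, with components on $n_1,\dots,n_c$ vertices where $c\ge2$. Applying induction to each component gives
\[
e(G)\le \frac{(k-1)\sum_i (n_i-1)}{2}=\frac{(k-1)(n-c)}{2}\le \frac{(k-1)(n-1)}{2}.
\]
Next suppose $G$ is connected but has a cut vertex $v$. Write $G=G_1\cup G_2$ with $V(G_1)\cap V(G_2)=\{v\}$ and $|V(G_i)|=n_i<n$. Every cycle lies within a single block, so each $G_i$ is $\Cge{k}$-free. Since $n_1+n_2=n+1$, we have $(n_1-1)+(n_2-1)=n-1$, and induction on $G_1$ and $G_2$ gives the bound exactly. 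Hence we may assume $G$ is $2$-connected with $n\ge k$ vertices.

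\emph{Low-degree deletion.} If some vertex $u$ has $\dG(u)\le \frac{k-1}{2}$, then $G-u$ is still $\Cge{k}$-free on $n-1$ vertices. Induction gives
\[
e(G)\le \frac{(k-1)(n-2)}{2}+\frac{k-1}{2}=\frac{(k-1)(n-1)}{2}.
\]
This covers the case $n-1<k$ as well, via the base case.

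\emph{The remaining case: high minimum degree.} Otherwise $\delta(G)\ge k/2$, and $G$ is $2$-connected with $n\ge k$ vertices. Here we invoke Dirac's theorem: a $2$-connected graph with minimum degree $\delta$ contains a cycle of length at least $\min\{n,2\delta\}$. This gives a cycle of length at least $\min\{n,k\}=k$, contradicting the assumption that $G$ is $\Cge{k}$-free.

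The main obstacle is this last step. If we want a self-contained proof rather than citing Dirac, we would take a longest path $P=x_0x_1\cdots x_m$ in $G$. By maximality, all neighbours of $x_0$ and of $x_m$ lie on $P$. A Pósa-type crossing argument on the index sets $\{i: x_0x_{i}\in E\}$ and $\{i:x_mx_{i-1}\in E\}$ then yields either a cycle through all of $V(P)$, which must be all of $V(G)$ by connectivity and maximality, or, using $2$-connectivity to route around, a cycle of length at least $2\delta\ge k$. The bookkeeping of this crossing-chords argument is the only delicate part; the other steps are routine.
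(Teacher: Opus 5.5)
The paper gives no proof of this statement. It is quoted from Erd\H{o}s and Gallai and used as a black box, only with $k=4$, to get $e(G[N_G(v)])\le 3(d_G(v)-1)/2$. Your proposal is correct modulo Dirac's theorem, and it follows the standard route.

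\emph{Why the steps hold.} The component and cut-vertex reductions are exact: $\sum_i(n_i-1)=n-c$, and $(n_1-1)+(n_2-1)=n-1$ with $e(G)=e(G_1)+e(G_2)$. Deleting a vertex of degree at most $(k-1)/2$ removes at most the slack between the bounds for $n-1$ and $n$ vertices. Otherwise $2\delta(G)\ge k$, and Dirac's circumference bound $\min\{n,2\delta\}$ for $2$-connected graphs gives a cycle of length at least $k$.

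\emph{Two comments.} First, the appeal to blocks in the cut-vertex step is unnecessary, since every subgraph of a $C_{\ge k}$-free graph is itself $C_{\ge k}$-free. Second, and more substantively, your closing sketch of Dirac's theorem is not yet a proof. When $\{i:x_0x_i\in E\}$ and $\{i:x_mx_{i-1}\in E\}$ are disjoint, you only learn $m\ge 2\delta$, which is a long path. Turning that into a cycle of length at least $2\delta$ is exactly where the real $2$-connectivity argument lies, and ``route around'' does not supply it. You should simply cite Dirac.

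\emph{What your route buys over the citation.} Stating the bound for all $n\ge 1$ would let the paper drop its separate handling of neighbourhoods with $d_G(v)\le 3$. Moreover, for the only case the paper needs, $k=4$, your block reduction alone finishes the proof, without Dirac or degree deletion. A $2$-connected graph on at least four vertices contains a cycle of length at least $4$: if a longest cycle were a triangle, Menger's theorem would let you reroute one of its edges through an outside vertex. So every block of a $C_{\ge 4}$-free graph is $K_2$ or $K_3$. Summing $e(B)\le \tfrac32(|B|-1)$ over blocks then gives $e(G)\le \tfrac32(n-1)$.
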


We shall also need the following theorem of Dzido and Jastrz\k{e}bski. Note that $W_4$ is a wheel graph on five vertices.

\begin{theorem}[Dzido--Jastrz\k{e}bski \cite{DJ}]\label{thm:W4}
For every integer $m\ge 4$,
\[
\ex(m,W_4)=
\begin{cases}
\dfrac{m^2}{4}+\dfrac{m}{2}-1, & m\equiv 2\pmod 4,\\[2mm]
\left\lfloor\dfrac{m^2}{4}\right\rfloor+\left\lfloor\dfrac{m}{2}\right\rfloor, & \text{otherwise}.
\end{cases}
\]
Moreover, if $m\not\equiv 2\pmod 4$, then the only extremal graph is $H_{a,b}$ with $a+b=m$ and $|a-b|\le 1$. If $m\equiv 2\pmod 4$, then there are exactly two extremal graphs, namely $H_{m/2,m/2}$ and $H_{m/2-1,m/2+1}$.
\end{theorem}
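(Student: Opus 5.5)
The plan is to prove the upper bound by induction on $m$ in steps of four, deleting a copy of $K_4$, which mirrors the strategy the paper uses for triangles. Write $f(m)$ for the claimed value of $\ex(m,W_4)$. A direct computation in the style of Fact~\ref{fact:diff} should give $f(m)-f(m-4)=2m-2$ for every residue class of $m$ modulo $4$. I would verify this first, since the argument depends on it.

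\textbf{Case 1: $G$ contains a copy $U=\{a,b,c,d\}$ of $K_4$.} The key observation is that no vertex $x\notin U$ has three neighbours in $U$. If $x$ were adjacent to $b,c,d$, then $a$ would be a hub adjacent to all of the $4$-cycle $x\,b\,d\,c\,x$, giving a $W_4$. Hence
\[
 e(G)\le e(G-U)+6+2(m-4)\le f(m-4)+2m-2=f(m),
\]
which closes the induction. The base cases $4\le m\le 7$ I would check by hand; there are few graphs with that many edges and they are quick to inspect. For the equality analysis, every outside vertex must then have exactly two neighbours in $U$, and $G-U$ must be extremal. Since $G-U$ is extremal for $m-4$, induction identifies it as some $H_{a',b'}$. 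Which pairs of $U$ can serve as the two neighbours of an outside vertex is then restricted by $W_4$-freeness, and adjacent outside vertices must have compatible pairs. This should force the two "sides" of $U$ to extend the two parts of $H_{a',b'}$, giving $H_{a,b}$. In the case $m\equiv 2\pmod 4$ the two balances $|a-b|\in\{0,2\}$ both survive this extension, which produces the two extremal graphs.

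\textbf{Case 2: $G$ is $K_4$-free.} Here I would aim for the strict inequality $e(G)<f(m)$, so that this case contributes no extremal graphs. If $G$ is triangle-free, Mantel gives $e(G)\le m^2/4<f(m)$ for $m\ge 4$. Otherwise I would take a max-cut partition $V=A\cup B$ and look at an edge $uv$ inside $A$. By maximality of the cut, each of $u,v$ has at least as many neighbours in $B$ as in $A$. If $u$ and $v$ have two common neighbours $w,w'\in B$ that are nonadjacent, then $K_4$-freeness and the large common neighbourhood should let us build a $4$-cycle in the neighbourhood of some vertex, giving a $W_4$. So inside edges should force the corresponding cross-neighbourhoods to be nearly disjoint. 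Each inside edge then costs at least one missing cross edge, up to an additive error of lower order. This should yield $e(G)\le \lfloor m^2/4\rfloor+O(1)$, with a constant below $\lfloor m/2\rfloor$ for $m$ not too small. A short direct check would handle the remaining small values of $m$.

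\textbf{Main obstacle.} I expect the $K_4$-free case to be the hard step. The $K_4$-deletion in Case 1 is clean and tight, but without a $K_4$ there is no equally sharp deletion: removing a triangle only saves $2m-3$ edges against a required saving of about $1.5m$ in the wrong direction. If the max-cut argument proves too lossy, the fallback is a different induction for this case: delete an edge $uv$ minimizing $d(u)+d(v)$. $W_4$-freeness together with $K_4$-freeness bounds the common neighbourhood of a suitable edge by one, which gives $d(u)+d(v)-1\le m=f(m)-f(m-2)$. The argument then induces within the $K_4$-free class, and the strictness needed to exclude extremal graphs comes from the base case.
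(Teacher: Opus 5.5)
The paper does not prove this theorem; it quotes it from Dzido and Jastrz\k{e}bski and uses it as a black box in the even case of Proposition~\ref{prop:extremal}, so your proposal has to stand on its own.

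\textbf{Case 1 is sound in outline.} The identity $f(m)-f(m-4)=2m-2$ holds in every residue class. A vertex with three neighbours in $U$ does force a $W_4$, but the hub is not $a$, since $a$ is not adjacent to $x$. The hub is, say, $b$, whose neighbourhood contains the $4$-cycle $a\,c\,x\,d$. The equality analysis is plausible along the lines of Lemmas~\ref{lem:label} and~\ref{lem:bip-label}. However, both the upper bound and the uniqueness claim depend on showing that $K_4$-free $W_4$-free graphs have strictly fewer than $f(m)$ edges. Your Case~2 does not establish this, because the claims it relies on are false.

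\textbf{The max-cut route fails.} Take $K_{a,b}$ with $a$ even and $a,b\approx m/2$, and add a perfect matching inside $A$.
\begin{itemize}
\item This graph is $K_4$-free and $W_4$-free: the link of a vertex of $B$ is a matching, and the link of a vertex of $A$ is a star $K_{1,b}$.
\item $(A,B)$ is a maximum cut, since the $a/2$ edge-disjoint triangles force at least $a/2$ deletions to make the graph bipartite.
\item Every inside edge has all of the independent set $B$ as common neighbours, yet no $W_4$ appears and no cross edge is missing.
\item The graph has about $m^2/4+m/4$ edges.
\end{itemize}
So your intermediate bound $e(G)\le\lfloor m^2/4\rfloor+O(1)$ is false. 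The true $K_4$-free maximum sits only about $m/4$ below $f(m)$, so any proof of strictness must control the linear term exactly. The heuristic that each inside edge costs a cross edge cannot do that.

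\textbf{The fallback also fails.} The icosahedron is $K_4$-free and $W_4$-free, since every link is an induced $C_5$. Yet every edge has exactly two common neighbours, so in general there is no edge with at most one common neighbour. Moreover, $f(m)-f(m-2)$ is not always $m$: it equals $m+1$, $m-1$, $m$ for $m\equiv 0\pmod 4$, $m\equiv 2\pmod 4$, and $m$ odd respectively. The steps $m\equiv 2\pmod 4$ therefore have no slack even with a correct deletion lemma, and you would have to carry the slack gained at the $m\equiv 0$ steps.

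\textbf{The underlying problem.} Without $\Wge{4}$-freeness, the links in a $K_4$-free graph are only $\{C_3,C_4\}$-free, not forests, and they can have superlinearly many edges. So the counting behind Lemma~\ref{lem:noK4} does not transfer. The $K_4$-free case is the real content of the Dzido--Jastrz\k{e}bski theorem, and it still needs an actual argument.
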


For a fixed copy of $K_4$, the next lemma records restrictions on the neighborhoods of vertices outside the $K_4$.

\begin{lemma}\label{lem:label}
Let $G$ be an $n$-vertex $\Wge{4}$-free graph. Suppose that $U=\{u_1,u_2,u_3,u_4\}$ induces a copy of $K_4$ in $G$. Then the following hold.
\begin{enumerate}[label=\textup{(\roman*)}]
\item For each $x\in V(G)\setminus U$, we have $d_U(x)\le 2$.
\item If $x,y\in V(G)\setminus U$ satisfy $d_U(x)=d_U(y)=2$ and $xy\in E(G)$, then either $N_U(x)=N_U(y)$ or $N_U(x)=U\setminus N_U(y)$.
\item For fixed $1\le i<j\le 4$, the graph induced by
\[
 X_{ij}=\{x\in V(G)\setminus U:N_U(x)=\{u_i,u_j\}\}
\]
consists of a matching together with some isolated vertices.
\item Suppose $x,y,z\in V(G)\setminus U$ satisfy $d_U(x)=d_U(y)=d_U(z)=2$. If $N_U(x)=N_U(y)$ and $zx,zy\in E(G)$, then $N_U(z)=U\setminus N_U(x)$.
\item If $x\in V(G)\setminus U$ satisfies $N_U(x)=\{u_i\}$, then there is at most one vertex $y\in \Nbr(x)\setminus U$ such that $d_U(y)=2$ and $u_i\in N_U(y)$.
\end{enumerate}
\end{lemma}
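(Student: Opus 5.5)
All five parts follow the same pattern. In each case we exhibit a vertex $c$ whose neighbourhood contains a cycle $C$ of length at least four; then $c$ together with $C$ spans a wheel from $\Wge{4}$, a contradiction. The hub $c$ will always be a vertex of $U$ or one of the outside vertices. We use that any three vertices of $U$ span a triangle, and that $G[U\setminus\{u\}]$ is a triangle in $\Nbr(u)$ for each $u\in U$.

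\textbf{(i).} Suppose $x\notin U$ has three neighbours in $U$, say $u_1,u_2,u_3$. In $\Nbr(u_1)$ we find the vertices $u_2,u_3,u_4,x$ with edges $u_2u_4$, $u_4u_3$, $u_3x$, $xu_2$. Hence $u_2u_4u_3x$ is a $4$-cycle in $\Nbr(u_1)$, giving a $W_4$.

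\textbf{(ii).} Let $N_U(x)=A$ and $N_U(y)=B$ with $|A|=|B|=2$ and $xy\in E(G)$. Suppose $A\ne B$ and $A\ne U\setminus B$. Then $|A\cap B|=1$; say $A=\{u_1,u_2\}$ and $B=\{u_1,u_3\}$. In $\Nbr(u_1)$ we have the path $u_2,u_4,u_3$ together with the edges $u_3y$, $yx$, $xu_2$. This gives the $5$-cycle $u_2u_4u_3yx$ in $\Nbr(u_1)$, hence a $W_5$.

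\textbf{(iii).} Let $X=X_{12}$. It suffices to show that $G[X]$ has no vertex of degree two, i.e.\ no path $x\,y\,z$ with $x,y,z\in X$. Suppose such a path exists. In $\Nbr(u_1)$ we find $u_2$, $x$, $y$, $z$ with edges $u_2x$, $xy$, $yz$, $zu_2$, which is a $4$-cycle. Since $G[X]$ has maximum degree at most one, it is a matching plus isolated vertices.

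\textbf{(iv).} Let $N_U(x)=N_U(y)=\{u_1,u_2\}$ and let $z$ be adjacent to both $x$ and $y$. By (ii), $N_U(z)$ is either $\{u_1,u_2\}$ or $\{u_3,u_4\}$. Suppose $N_U(z)=\{u_1,u_2\}$. Then $z,x,y$ all lie in $X_{12}$ and $z$ has two neighbours in $X_{12}$, contradicting (iii). Hence $N_U(z)=U\setminus N_U(x)$.

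\textbf{(v).} Let $N_U(x)=\{u_1\}$ and suppose $y,y'\notin U$ are two neighbours of $x$ with $d_U(y)=d_U(y')=2$ and $u_1\in N_U(y)\cap N_U(y')$. Write $N_U(y)=\{u_1,u_a\}$ and $N_U(y')=\{u_1,u_b\}$. All of $y,y',x$ lie in $\Nbr(u_1)$, and $\Nbr(u_1)$ contains the triangle on $\{u_2,u_3,u_4\}$. If $a\ne b$, the cycle $u_a\,y\,x\,y'\,u_b\,u_a$ lies in $\Nbr(u_1)$ and has length five. If $a=b$, the cycle $u_a\,y\,x\,y'\,u_a$ lies in $\Nbr(u_1)$ and has length four. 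Either way $u_1$ is the hub of a wheel in $\Wge{4}$, a contradiction.

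The only step needing care is (v), where one must handle the case $a=b$ separately so that the cycle used does not repeat a vertex. This works because $y\ne y'$ are distinct, which gives a genuine $4$-cycle; the edge $yy'$ is not needed. In (ii) we likewise check that the five listed vertices are distinct, which holds since $x,y\notin U$.
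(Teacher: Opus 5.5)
Your proof is correct and matches the paper's argument part by part. In each case you exhibit the same short cycles in $G[\Nbr(u_1)]$, and you deduce (iv) from (ii) and (iii) as the paper does. The only difference is cosmetic: in (ii) you route through $u_4$ to get the $5$-cycle $u_2u_4u_3yx$, whereas the paper uses the edge $u_2u_3$ directly to get the $4$-cycle $xu_2u_3y$; either yields a wheel in $\Wge{4}$.
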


\begin{proof}
(i) Suppose that some $x\in V(G)\setminus U$ has at least three neighbors in $U$, say $\{u_1,u_2,u_3\}\subseteq N_U(x)$. Then $u_2,u_4,u_3,x$ form a $4$-cycle in $G[\Nbr(u_1)]$, giving a copy of $W_4$, a contradiction.

(ii) Suppose $N_U(x)$ and $N_U(y)$ are neither equal nor complementary. Then, up to relabelling, $N_U(x)=\{u_1,u_2\}$ and $N_U(y)=\{u_1,u_3\}$. Since $xy\in E(G)$, the vertices $x,u_2,u_3,y$ form a $4$-cycle in $G[\Nbr(u_1)]$, a contradiction.

(iii) Suppose that some $z\in X_{ij}$ is adjacent to two distinct vertices $x,y\in X_{ij}$. Then $x,z,y,u_j$ form a $4$-cycle in $G[\Nbr(u_i)]$, a contradiction.

(iv) This follows directly from (ii) and (iii).

(v) Suppose that there are two vertices $y,z\in \Nbr(x)\setminus U$ such that $d_U(y)=d_U(z)=2$ and $u_i\in N_U(y)\cap N_U(z)$. If $N_U(y)=N_U(z)=\{u_i,u_j\}$, then $u_j,y,x,z$ form a $4$-cycle in $G[\Nbr(u_i)]$. If $N_U(y)=\{u_i,u_j\}$ and $N_U(z)=\{u_i,u_k\}$ with $j\ne k$, then $y,x,z,u_k,u_j$ form a $5$-cycle in $G[\Nbr(u_i)]$. In both cases we obtain a forbidden wheel, a contradiction.
\end{proof}

\begin{lemma}\label{lem:noK4}
Let $G$ be a $\Wge{4}$-free graph with $n\ge 5$ vertices. If $G$ does not contain $K_4$, then $t(G)<h(n)$.
\end{lemma}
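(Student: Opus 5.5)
If $G$ has no $K_4$, then for every vertex $v$ the graph $G[\Nbr(v)]$ is triangle-free, since a triangle there together with $v$ would form a $K_4$. Because $G$ is $\Wge{4}$-free, $G[\Nbr(v)]$ also contains no cycle of length at least four. So each neighborhood is a forest and $e(G[\Nbr(v)])\le \dG(v)-1$ whenever $\dG(v)\ge 1$. Isolated vertices contribute nothing. Plugging this into \eqref{eq:triangle-neighborhood} gives
\[
 t(G)\le \frac13\sum_{v}(\dG(v)-1)^+\le \frac13\bigl(2e(G)-n'\bigr),
\]
where $n'$ is the number of non-isolated vertices. We may assume $G$ has no isolated vertices: deleting them keeps the triangle count, and $h$ is increasing, so $t(G)<h(n')\le h(n)$ would follow. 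The main inequality is then $t(G)\le \frac{2e(G)-n}{3}$.

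Next we bound $e(G)$ using that $G$ is $K_4$-free, so Tur\'an's theorem gives $e(G)\le n^2/3$. This yields
\[
 t(G)\le \frac{2n^2/3-n}{3}=\frac{2n^2-3n}{9}.
\]
The bound $2n^2/9$ is smaller than $n^2/4$, so this settles all large $n$. Precisely, we need $\frac{2n^2-3n}{9}<h(n)$. Since $h(n)\ge \frac{n^2-n}{4}-1$ in all residue classes, it suffices that $4(2n^2-3n)<9(n^2-n)-36$, that is, $n^2+3n-36>0$. This holds for $n\ge 5$ (giving $4>0$).

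This needs checking at the boundary. At $n=5$ we have $h(5)=5$, while the bound gives $\frac{50-15}{9}<4$, which is fine. At $n=6$, $h(6)=8$ and the bound is $6$. At $n=7$, $h(7)=10$ and the bound is about $8.5$. At $n=8$, $h(8)=16$. For $n\ge 8$ the general inequality applies. The floor in $h$ for odd $n$ only loses less than $1$, which is covered by the $-1$ slack built into the lower estimate on $h(n)$. So the inequality is strict for every $n\ge 5$.

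The only potentially delicate step is the bookkeeping that turns the estimate into a \emph{strict} inequality valid for all $n\ge 5$, not just asymptotically. I would handle it with the uniform lower bound $h(n)\ge (n^2-n)/4-1$ and a quadratic comparison, checking the smallest cases $5\le n\le 7$ directly as above. One could sharpen the argument with Theorem~\ref{thm:EG} (with $k=4$), but the forest bound is all that is needed.
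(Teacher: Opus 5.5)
Your proof is correct and is essentially the paper's argument: forest neighborhoods give $3t(G)\le 2e(G)-|X|$ with $X$ the set of non-isolated vertices, Tur\'an's theorem bounds $e(G)$, and a numerical comparison with $h(n)$ finishes. The paper does not delete isolated vertices; it uses only $|X|\ge 3$ and checks each residue class mod $4$. Your deletion gives the stronger $-n'$ term and lets a single uniform comparison suffice.

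One small repair: after deleting isolated vertices, $n'$ may fall below $5$. For example, for a triangle plus isolated vertices, $t(G')=h(3)$, so $t(G)<h(n')$ fails. Instead of invoking $t(G)<h(n')$, note that $t(G)\le (2n'^2-3n')/9\le (2n^2-3n)/9<h(n)$, since this bound is increasing in $n'$.
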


\begin{proof}
There is nothing to prove if $t(G)=0$, so assume $t(G)>0$. Let $X$ be the set of non-isolated vertices of $G$. Then $|X|\ge 3$. For every $v\in X$, the graph $G[\Nbr(v)]$ is $\Cge{4}$-free because $G$ is $\Wge{4}$-free; moreover, it is triangle-free because $G$ has no $K_4$. Hence $G[\Nbr(v)]$ is a forest and
\[
 e(G[\Nbr(v)])\le \dG(v)-1.
\]
Summing over all $v\in X$ gives
\[
3t(G)=\sum_{v\in X} e(G[\Nbr(v)])
\le \sum_{v\in X}(\dG(v)-1)
=2e(G)-|X|\le 2e(G)-3.
\]
Since $G$ is $K_4$-free, Tur\'an's theorem gives $e(G)\le \lfloor n^2/3\rfloor$. Therefore
\[
 3t(G)\le 2\left\lfloor\frac{n^2}{3}\right\rfloor-3.
\]
If $n$ is even, then
\[
2\left\lfloor\frac{n^2}{3}\right\rfloor-3\le \frac{2n^2}{3}-3<\frac{3n^2}{4}-3\le 3h(n).
\]
If $n\equiv 1\pmod 4$, then
\[
2\left\lfloor\frac{n^2}{3}\right\rfloor-3\le \frac{2n^2}{3}-3<\frac{3(n^2-n)}{4}=3h(n),
\]
because $n^2-9n+36>0$ for all integers $n$. If $n\equiv 3\pmod 4$, then $n\ge 7$ and
\[
2\left\lfloor\frac{n^2}{3}\right\rfloor-3\le \frac{2n^2}{3}-3<\frac{3(n^2-n-2)}{4}=3h(n),
\]
because $n^2-9n+18>0$ for all integers $n\ge 7$. Thus $t(G)<h(n)$ for $n\ge5$.
\end{proof}

\begin{lemma}\label{lem:touching}
Let $G$ be an $n$-vertex $\Wge{4}$-free graph. Suppose that $U=\{u_1,u_2,u_3,u_4\}$ induces a copy of $K_4$. Let $\tau(U)$ denote the number of triangles of $G$ intersecting $U$. Then
\[
 \tau(U)\le
 \begin{cases}
 2n-4, & n\text{ even},\\
 2n-5, & n\text{ odd}.
 \end{cases}
\]
\end{lemma}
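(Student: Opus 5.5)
The plan is to split $\tau(U)$ according to how many vertices of a triangle lie in $U$, and to bound the triangles meeting $U$ in one vertex through the neighbourhood graphs $G[\Nbr(u_i)]$. Write $V(G)\setminus U=S_0\cup S_1\cup S_2$, where $S_j=\{x:d_U(x)=j\}$; this is a partition by Lemma~\ref{lem:label}(i). Put $s_j=|S_j|$, so $s_0+s_1+s_2=n-4$. Let $A_i=\Nbr(u_i)\setminus U$, $a_i=|A_i|$, $b_i=|A_i\cap S_2|$ and $e_i=e(G[A_i])$. Then:
\begin{itemize}
\item exactly $4$ triangles lie inside $U$;
\item each $x\in S_2$ lies in exactly one triangle with two vertices in $U$, and no other outside vertex lies in such a triangle;
\item a triangle meeting $U$ only in $u_i$ is an edge of $G[A_i]$.
\end{itemize}
Hence
\[
\tau(U)=4+s_2+\sum_{i=1}^4 e_i ,
\]
and the goal becomes $s_2+\sum_i e_i\le 2(n-4)=2s_0+2s_1+2s_2$, with one less when $n$ is odd.

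To bound $e_i$, I use that $G[\Nbr(u_i)]$ has $a_i+3$ vertices and is $\Cge{4}$-free, since $G$ is $\Wge{4}$-free. Theorem~\ref{thm:EG} with $k=4$ gives
\[
e(G[\Nbr(u_i)])\le \left\lfloor \tfrac{3(a_i+2)}{2}\right\rfloor .
\]
This graph contains the triangle on $U\setminus\{u_i\}$ and the $b_i$ edges joining $A_i\cap S_2$ to $U\setminus\{u_i\}$; each vertex of $S_2\cap A_i$ has exactly one neighbour in $U\setminus\{u_i\}$. Subtracting these,
\[
e_i\le \left\lfloor \tfrac{3(a_i+2)}{2}\right\rfloor-3-b_i \le \tfrac{3a_i}{2}-b_i .
\]
Each vertex of $S_1$ lies in one $A_i$ and each vertex of $S_2$ in two, so $\sum_i a_i=s_1+2s_2$ and $\sum_i b_i=2s_2$. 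Summing gives $\sum_i e_i\le \tfrac32 s_1+s_2$. Therefore
\[
s_2+\sum_i e_i\le \tfrac32 s_1+2s_2\le 2(n-4),
\]
which settles the even case.

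For $n$ odd I would analyse equality and show it is impossible; I expect this to be the main obstacle. Equality forces $s_0=s_1=0$, so $S_2=V(G)\setminus U$ and $s_2=n-4$ is odd. It also forces every $a_i$ to be even, since otherwise the floor loses $\tfrac12$, and it forces $G[\Nbr(u_i)]$ to attain the Erdős–Gallai bound with $k=4$. I then want to show that every vertex of $G[\Nbr(u_i)]$ lies in a triangle of that graph. The point needing care is exactly when the bound $3(d-1)/2$ is attained: I expect it only for graphs all of whose blocks are triangles, and I will verify this through the block (cactus) structure of $\Cge{4}$-free graphs, where each block is a $K_2$ or a $K_3$.

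Granting this, take $x\in X_{ij}$ and view it inside $G[\Nbr(u_i)]$. There $x$ is adjacent to $u_j$, and the triangle block through the edge $xu_j$ has a third vertex $y$. This $y$ is adjacent to $u_i$ and $u_j$, so $y\in X_{ij}$ because $d_U(y)\le 2$, and $xy\in E(G)$. Thus no vertex of $X_{ij}$ is isolated in $G[X_{ij}]$. By Lemma~\ref{lem:label}(iii), $G[X_{ij}]$ is then a perfect matching, so every $|X_{ij}|$ is even. Hence $s_2=\sum_{i<j}|X_{ij}|$ is even, contradicting that $s_2$ is odd, and this gives $\tau(U)\le 2n-5$. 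If the characterisation of equality in Theorem~\ref{thm:EG} turns out to be messy, I would instead prove the cactus bound directly by induction on blocks, tracking the $\tfrac12$ losses. Lemma~\ref{lem:label}(v) is available as a backup should the singletons in $S_1$ need finer control.
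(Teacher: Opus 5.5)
Your proposal is correct. For even $n$ it matches the paper, but for odd $n$ it finishes by a genuinely different argument.

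\textbf{Where you agree with the paper.} You use the same split $\tau(U)=4+q_2+q_1$ with $q_2=s_2$. You use the same edge count $e(G[\Nbr(u_i)])=3+b_i+e_i$ (the paper's $3+p_i+t_i$). You sum the same Erd\H{o}s--Gallai estimate over $i$ to reach $\tau(U)\le 2n-4-2s_0-s_1/2$. One small point: Theorem~\ref{thm:EG} needs at least $4$ vertices, so it does not formally apply when $a_i=0$. In that case $G[\Nbr(u_i)]$ is a triangle and the bound is trivial, as the paper notes.

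\textbf{How the paper finishes the odd case.} Once $\alpha_0=\alpha_1=0$, the paper recounts $q_1$ directly. It shows there are no edges between $X_{ij}$ and $X_{ik}$ for $j\ne k$, since such an edge gives a $4$-cycle in $G[\Nbr(u_i)]$. It notes that edges between complementary classes lie in no triangle with a vertex of $U$. With Lemma~\ref{lem:label}(iii) this gives $q_1\le 2\sum\lfloor |X_{ij}|/2\rfloor\le n-5$, because some $X_{ij}$ has odd size.

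\textbf{How you finish it.} You assume $\tau(U)=2n-4$ and extract equality in the Erd\H{o}s--Gallai bound at every $u_i$. From the equality structure you deduce that every $x\in X_{ij}$ has a neighbour in $X_{ij}$. Hence each $G[X_{ij}]$ is a perfect matching and $n-4=\sum|X_{ij}|$ is even, a contradiction.

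The extra ingredient you rely on, the equality characterization, is routine and should be written out. Every block of a $\Cge{4}$-free graph is a $K_2$ or a triangle. So a graph with $d$ vertices, $c$ components and $\beta$ bridge blocks has exactly $\tfrac32(d-c)-\tfrac{\beta}{2}$ edges. Attaining $\tfrac32(d-1)$ therefore forces $c=1$ and $\beta=0$.

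You should also say why the triangle block through $xu_j$ has its third vertex outside $U$. That block is not $U\setminus\{u_i\}$, because $x$ is not adjacent to the other two vertices of $U$. The third vertex is then adjacent to $u_i$ and $u_j$, so it lies in $X_{ij}$.

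\textbf{What each route buys.} Your route avoids the cross-class claim and the edge-by-edge accounting of $q_1$. It also yields extra structure: in the equality case each $G[\Nbr(u_i)]$ is a connected cactus of triangles. The paper's route uses only the upper bound in Theorem~\ref{thm:EG}. It also produces an explicit bound on $q_1$, rather than merely excluding exact equality.
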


\begin{proof}
For each $x\in V(G)\setminus U$, Lemma~\ref{lem:label}(i) gives $d_U(x)\le 2$. For $0\le i\le 2$, let
\[
 \alpha_i=|\{x\in V(G)\setminus U:d_U(x)=i\}|.
\]
Thus $\alpha_0+\alpha_1+\alpha_2=n-4$. Similarly, for $1\le i\le 2$, let $q_i$ be the number of triangles that intersect $U$ in exactly $i$ vertices. Together with the four triangles contained in $U$, we have
\[
 \tau(U)=4+q_1+q_2.
\]
A vertex $x\notin U$ with two neighbors in $U$ determines exactly one triangle intersecting $U$ in two vertices. Hence $q_2=\alpha_2$.

For each $1\le i\le 4$, let
\[
 s_i=|\{x\in V(G)\setminus U:d_U(x)=1,\ N_U(x)=\{u_i\}\}|
\]
and
\[
 p_i=|\{x\in V(G)\setminus U:d_U(x)=2,\ u_i\in N_U(x)\}|.
\]
Then $\sum_{i=1}^4 s_i=\alpha_1$, $\sum_{i=1}^4 p_i=2\alpha_2$, and $\dG(u_i)=3+s_i+p_i$. Let $t_i$ be the number of triangles containing $u_i$ and no other vertex of $U$. Counting edges in $G[\Nbr(u_i)]$, we see that there are three edges contained in $U\setminus\{u_i\}$, exactly $p_i$ edges joining a vertex of $U\setminus\{u_i\}$ to a vertex outside $U$, and exactly $t_i$ edges with both ends outside $U$. Thus,
$e(G[\Nbr(u_i)])=3+p_i+t_i.$
For each $i$, the induced subgraph $G[\Nbr(u_i)]$ is $C_{\ge 4}$-free and $\dG(u_i)\ge 3$. If $\dG(u_i)=3$, then $e(G[\Nbr(u_i)])\le 3(\dG(u_i)-1)/2$. If $\dG(u_i)\ge 4$, then it follows from Theorem \ref{thm:EG}. Thus
\[
3+p_i+t_i=e(G[\Nbr(u_i)])\le \frac{3(\dG(u_i)-1)}2
=\frac{3(2+s_i+p_i)}2.
\]
It follows that
$t_i\le \frac{3s_i}{2}+\frac{p_i}{2}.$
Summing over $1\le i\le 4$, we obtain
\[
 q_1=\sum_{i=1}^4 t_i\le \frac{3\alpha_1}{2}+\alpha_2.
\]
Consequently,
\begin{equation}\label{eq:key-touching}
 \tau(U)=4+q_1+q_2
 \le 4+\frac{3\alpha_1}{2}+2\alpha_2
 =2n-4-2\alpha_0-\frac{\alpha_1}{2}.
\end{equation}
This gives $\tau(U)\le 2n-4$, which proves the desired bound when $n$ is even. If $n$ is odd and $\alpha_0+\alpha_1>0$, then the right-hand side of \eqref{eq:key-touching} is strictly less than $2n-4$, and since $\tau(U)$ is an integer, $\tau(U)\le 2n-5$.

It remains to consider the case in which $n$ is odd and $\alpha_0=\alpha_1=0$. Then every vertex outside $U$ has exactly two neighbors in $U$. For $1\le i<j\le 4$, set
\[
 X_{ij}=\{x\in V(G)\setminus U:N_U(x)=\{u_i,u_j\}\}.
\]
These six sets partition $V(G)\setminus U$. We first claim that if $\{i,j\}\cap\{p,q\}\ne\emptyset$ and $\{i,j\}\ne\{p,q\}$, then $e(X_{ij},X_{pq})=0$. Indeed, if $x\in X_{ij}$, $y\in X_{ik}$ with $j\ne k$, and $xy\in E(G)$, then $x,u_j,u_k,y$ form a $4$-cycle in $G[\Nbr(u_i)]$, a contradiction.

By Lemma~\ref{lem:label}(iii), each $G[X_{ij}]$ consists of a matching together with isolated vertices. Now an edge in $V(G)\setminus U$ contributes to $q_1$ exactly when its endpoints have a common neighbor in $U$. By the preceding claim and Lemma~\ref{lem:label}(iii), this occurs only for edges inside a set $X_{ij}$, and each such edge contributes exactly two triangles to $q_1$. Therefore
\[
 q_1=2\sum_{1\le i<j\le 4} e(G[X_{ij}])
 \le 2\sum_{1\le i<j\le 4}\left\lfloor\frac{|X_{ij}|}{2}\right\rfloor.
\]
Since
\[
 \sum_{1\le i<j\le 4}|X_{ij}|=n-4
\]
is odd, at least one set $X_{ij}$ has odd size. Hence $q_1\le n-5$. Together with $q_2=\alpha_2=n-4$, this yields
\[
 \tau(U)=4+q_1+q_2\le 4+(n-5)+(n-4)=2n-5.
\]
The proof is complete.
\end{proof}

\section{Proof of Theorem~\ref{main}}\label{sec:proof-main}

\subsection{The upper bound}\label{subsec:upper}

\begin{proof}[Proof of Proposition~\ref{prop:upper}]
We proceed by induction on $n$. The cases $n\le 4$ are immediate. Let $n\ge 5$, and assume the statement holds for all smaller orders. Let $G$ be an $n$-vertex $\Wge{4}$-free graph.

If $G$ is $K_4$-free, then Lemma~\ref{lem:noK4} gives $t(G)<h(n)$. Thus we may assume that $G$ contains a copy of $K_4$ with vertex set $U$. Now
\[
 t(G)=t(G-U)+\tau(U).
\]
Since $G-U$ is still $\Wge{4}$-free, the induction hypothesis, Fact~\ref{fact:diff}, and Lemma~\ref{lem:touching} give
\[
 t(G)\le h(n-4)+(h(n)-h(n-4))=h(n).
\]
This proves the upper bound.
\end{proof}

\subsection{The extremal graph for even n}\label{subsec:even}

In this subsection we characterize the extremal graph for even $n$.

\begin{proof}[Proof of Proposition~\ref{prop:extremal} for even $n$]
Let $G$ be an $n$-vertex $\Wge{4}$-free graph with $t(G)=h(n)$, where $n$ is even. First, we claim
 that $G$ has no isolated vertex. Otherwise, assume that $z$ is an isolated vertex. By Proposition \ref{prop:upper}, we have
 \[
 t(G)=t(G-z)\le h(n-1)<h(n).
 \]
 This contradicts $t(G)=h(n)$. Thus $\dG(v)\ge 1$ for every $v$. If $\dG(v)\le 3$, then $ e(G[\Nbr(v)])\le 3(\dG(v)-1)/2$. If $\dG(v)\ge 4$, then we also have  $e(G[\Nbr(v)])\le 3(\dG(v)-1)/2$ by Theorem~\ref{thm:EG} as the graph $G[\Nbr(v)]$ is $\Cge{4}$-free. Therefore,
 for every vertex $v\in V(G)$,  the number of edges in the induced graph $G[\Nbr(v)]$ satisfies
\[
 e(G[\Nbr(v)])\le \frac{3(\dG(v)-1)}2.
\]
Summing over all vertices of $G$, we get
\[
 3t(G)=\sum_{v\in V(G)} e(G[\Nbr(v)])
 \le \frac32\sum_{v\in V(G)}(\dG(v)-1)
 =3e(G)-\frac{3n}{2}.
\]
Since $t(G)=h(n)$, it follows that
\[
 h(n)=t(G)\le e(G)-\frac n2.
\]
Thus
\[
 e(G)\ge h(n)+\frac n2=\ex(n,W_4).
\]
On the other hand, $G$ is $W_4$-free, so $e(G)\le \ex(n,W_4)$. Therefore $G$ is an extremal graph for $\ex(n,W_4)$.

If $n=4q$, then Theorem~\ref{thm:W4} implies that the only extremal graph for $W_4$ is $H_{2q,2q}=H(n)$. If $n=4q+2$, then the two extremal graphs for $W_4$ are $H_{2q+1,2q+1}$ and $H_{2q,2q+2}$. Their numbers of triangles are
$t(H_{2q+1,2q+1})=4q^2+2q<h(n)$
and
$t(H_{2q,2q+2})=4q^2+4q=h(n).$
Since $t(G)=h(n)$, we must have $G\cong H_{2q,2q+2}=H(n)$.
\end{proof}

\subsection{The extremal graph for odd n}\label{subsec:odd}

Let $G$ be a $\Wge{4}$-free graph with $t(G)=h(n)$, where $n$ is odd. We prove that $G\cong H(n)$. This is trivial for $n=3$, so assume $n\ge 5$. The proof is by induction on $n$; we treat $n=5,7,9,11$ and $n\ge 13$ separately.

Throughout this subsection, fix the following setup. Since $t(G)=h(n)$, Lemma~\ref{lem:noK4} implies that $G$ contains a copy $U=\{u_1,u_2,u_3,u_4\}$ of $K_4$. For $x\in V(G)\setminus U$, we call $x$ a \emph{singleton} if $|N_U(x)|=1$ and a \emph{double} if $|N_U(x)|=2$. The set $N_U(x)$ is called the label of $x$. If $N_U(x)=\{u_i,u_j\}$, then write $P=\{u_i,u_j\}$, $P^c=U\setminus P$, and $X_P=X_{ij}$.

By Lemma~\ref{lem:touching} and Fact~\ref{fact:diff},
\[
 h(n)=t(G)\le h(n-4)+\tau(U)\le h(n-4)+2n-5=h(n).
\]
Hence equality holds throughout. In particular,
$\tau(U)=2n-5$ and $t(G')=h(n-4),$
where $G'=G-U$. From \eqref{eq:key-touching}, equality implies
\begin{equation}\label{eq:alpha}
 \alpha_0=0\quad\text{and}\quad \alpha_1\in\{0,1,2\},
\end{equation}
where $\alpha_i=|\{x\in V(G)\setminus U:d_U(x)=i\}|$. Since $\tau(U)=4+q_1+q_2$, $q_2=\alpha_2$, and $n-4=\alpha_1+\alpha_2$, we have
\begin{equation}\label{eq:T1}
 q_1=\tau(U)-4-q_2=2n-9-\alpha_2=n-5+\alpha_1.
\end{equation}
Moreover, by the induction hypothesis, $G'\cong H(n-4)$ for $n\ge 7$. When $n\ge 7$, we may write $G'\cong H_{a,b}$, where
\[
 a=\frac{n-5}{2},\qquad b=\frac{n-3}{2}.
\]

We need one additional  lemma.

\begin{lemma}\label{lem:bip-label}
Let $G$ be a $\Wge{4}$-free graph. Suppose that $U\subseteq V(G)$ induces a copy of $K_4$ and  $V(G)\setminus U$ contains a complete bipartite graph with parts $A$ and $B$.
\begin{enumerate}[label=\textup{(\roman*)}]
\item If there are at least two doubles in one part with the same label $P\subseteq U$, $|P|=2$, then the label of each vertex in the opposite part is contained in $P^c$.
\item If $A$ contains at least two doubles and $B$ contains at least three doubles, then there exists a unique $2$-set $P\subseteq U$ such that all doubles in $A$ have label $P$ and all doubles in $B$ have label $P^c$.
\end{enumerate}
\end{lemma}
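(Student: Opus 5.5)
Both parts follow from Lemma~\ref{lem:label}, using that every vertex of $A$ is adjacent to every vertex of $B$, and $d_U\le 2$ for every vertex outside $U$ by Lemma~\ref{lem:label}(i).

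For (i), let $x,y$ be two doubles in (say) $A$ with $N_U(x)=N_U(y)=P$, and let $z\in B$; then $zx,zy\in E(G)$. If $z$ is a double, Lemma~\ref{lem:label}(iv) gives $N_U(z)=P^c$ directly. If $z$ is a singleton with label $\{u_i\}$ and $u_i\in P$, then $x$ and $y$ are two neighbours of $z$ outside $U$ that are doubles containing $u_i$ in their labels. This contradicts Lemma~\ref{lem:label}(v), so $u_i\in P^c$. If $z$ has no neighbour in $U$, its label is empty and trivially contained in $P^c$. Hence every label in $B$ lies in $P^c$.

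For (ii), let $x_1,x_2$ be doubles in $A$ and $y_1,y_2,y_3$ doubles in $B$. Every $x_k y_l$ is an edge between doubles, so by Lemma~\ref{lem:label}(ii) $N_U(y_l)\in\{N_U(x_k),N_U(x_k)^c\}$ for all $k,l$. The first step is to find two doubles in the same part with a common label.
\begin{itemize}
\item Fix $x_1$ with label $Q$. Each $y_l$ has label $Q$ or $Q^c$, so by pigeonhole two of the three $y_l$ share a label $R\in\{Q,Q^c\}$.
\item By part (i) applied to these two doubles in $B$, every vertex of $A$ has label contained in $R^c$. A double in $A$ must then have label exactly $R^c$. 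So all doubles in $A$ (at least two) share the label $P:=R^c$.
\item Applying part (i) again, now to two doubles of $A$ with label $P$, every label in $B$ is contained in $P^c$. So every double in $B$ has label exactly $P^c$.
\end{itemize}
Uniqueness is immediate: $P$ is determined as the common label of the doubles in $A$, which exist.

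The step that needs the most care is this pigeonhole step. It is what forces a repeated label on one side, and it explains why three doubles are required in $B$: with only two doubles on each side, the labels could a priori alternate as $Q$ and $Q^c$. Beyond that, the argument reduces to checking that Lemma~\ref{lem:label}(iv) and (v) apply with the right hypotheses, in particular the adjacency $zx,zy$ supplied by completeness of the bipartite graph.
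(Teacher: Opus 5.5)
Your proof is correct and follows essentially the same route as the paper: part (i) comes from Lemma~\ref{lem:label}(iv) and (v), and part (ii) finds two equally-labelled doubles in $B$ and then applies (i) twice. The only differences are minor. The paper uses Lemma~\ref{lem:label}(iii) to see directly that these two doubles have label $P^c$, whereas your pigeonhole step leaves $R$ undetermined and lets part (i) force $R=Q^c$. You also cover the case $d_U(z)=0$, which the paper's proof of (i) leaves implicit.
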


\begin{proof}
(i) Let $a_1$ and $a_2$ be two doubles in one part, say in $A$, both with label $P$. Let $b\in B$. Since $A$ and $B$ form a complete bipartite graph, $b$ is adjacent to both $a_1$ and $a_2$. If $b$ is a double, then Lemma~\ref{lem:label}(iv) gives $N_U(b)=P^c$. If $b$ is a singleton, then Lemma~\ref{lem:label}(v) implies that its unique label cannot lie in $P$, so $N_U(b)\subseteq P^c$.

(ii) Choose a double $x\in A$ and write $N_U(x)=P$. Every double in $B$ is adjacent to $x$, so by Lemma~\ref{lem:label}(ii) its label is either $P$ or $P^c$.  Note that $x$ is adjacent to every vertex of $B$. Since $B$ contains at least three doubles and $G[X_P]$ is a matching by Lemma~\ref{lem:label}(iii), at least two doubles in $B$ have label $P^c$. Applying (i) to these two doubles shows that every double in $A$ has label $P$. Since $A$ contains at least two doubles, another application of (i) shows that every double in $B$ has label $P^c$. The uniqueness of $P$ is immediate.
\end{proof}

\medskip
\newcommand{\thickunderline}[2][2pt]{%
  \setbox0=\hbox{#2}%
  \vtop{\hbox{#2}\vspace{0.2ex}\hrule height #1}%
}
\noindent\thickunderline[1pt]{\textbf{The case $n=5$.}}
The graph $G'$ contains only one vertex, say $x$. Thus $q_1=0$. Since $\tau(U)=2n-5=5=4+q_1+q_2$, we get $q_2=1$, which means that $|N_U(x)|=2$. Hence $G\cong H_{2,3}=H(5)$.

\medskip
\providecommand{\thickunderline}{}
\renewcommand{\thickunderline}[2][2pt]{%
  \setbox0=\hbox{#2}%
  \vtop{\hbox{#2}\vspace{0.2ex}\hrule height #1}%
}
\noindent\thickunderline[1pt]
{\textbf{The case $n=7$.}}
The graph $G'$ has three vertices and $h(3)=1$, so $G'$ is a triangle. Write $V(G')=\{x,y,z\}$. By \eqref{eq:T1}, $q_1=2+\alpha_1$.

If $\alpha_1=0$, then all three vertices are doubles. Since $x,y,z$ are pairwise adjacent, Lemma~\ref{lem:label}(ii) and (iii) imply that exactly two of them have the same label and the remaining one has the complementary label. It follows directly that $G\cong H(7)$.

If $\alpha_1=1$, assume that $x$ is the singleton and $y,z$ are doubles. Let $N_U(x)=\{u_1\}$. Then $q_1=3$. If $u_1\notin N_U(y)\cup N_U(z)$, then $q_1\le 2$, a contradiction. Thus, without loss of generality, $u_1\in N_U(y)$. Lemma~\ref{lem:label}(v) implies that $u_1\notin N_U(z)$. Since $y$ and $z$ are adjacent doubles, Lemma~\ref{lem:label}(ii) gives $N_U(y)\cap N_U(z)=\emptyset$. Hence $q_1=1$, again a contradiction.

If $\alpha_1=2$, then $q_1=4$. Each edge of the triangle $xyz$ lies in at most one triangle counted by $q_1$, because each such edge contains a singleton endpoint. Thus $q_1\le 3$, a contradiction. This completes the case $n=7$.

\medskip
\providecommand{\thickunderline}{}
\renewcommand{\thickunderline}[2][2pt]{%
  \setbox0=\hbox{#2}%
  \vtop{\hbox{#2}\vspace{0.2ex}\hrule height #1}%
}
\noindent\thickunderline[1pt]
{\textbf{The case $n=9$.}}
Here $G'\cong H(5)$. We may write $V(G')=A\cup B$, where
$A=\{a_1,a_2\},$ $B=\{b_1,b_2,b_3\},$
$A$ is completely adjacent to $B$. In addition, $A$ induces the edge $a_1a_2$ and the only edge in $B$ is $b_1b_2$.

We first claim that $b_3$ is a double for $U$. Let $U'=\{a_1,a_2,b_1,b_2\}$. Then $G[U']$ is a copy of $K_4$. By the induction hypothesis, $G-U'\cong H(5)$. The graph $H(5)$ contains a unique $K_4$, and every vertex outside this $K_4$ has exactly two neighbors in it. Since $G[U]$ is a $K_4$ in $G-U'$, the remaining vertex $b_3$ is a double for $U$.

By \eqref{eq:T1}, $q_1=4+\alpha_1$. We distinguish three cases.

\smallskip
\noindent\underline{\emph{Case 1: $\alpha_1=0$.}}

All vertices in $A\cup B$ are doubles. By Lemma~\ref{lem:bip-label}(ii), there is a unique $2$-set $P\subseteq U$ such that all vertices in $A$ have label $P$ and all vertices in $B$ have label $P^c$. After relabelling, assume $P=\{u_1,u_2\}$ and $P^c=\{u_3,u_4\}$. Then
$A\cup\{u_3,u_4\}$ and $B\cup\{u_1,u_2\}$
form the two parts of $H_{4,5}$. Thus $G\cong H(9)$.

\smallskip
\noindent\underline{\emph{Case 2: $\alpha_1=1$.}}

We show that this is impossible, since then $q_1=5$.

If the singleton lies in $A$, say $a_1$ is the singleton, then $a_2$ is a double; write $N_U(a_2)=P$. Every vertex of $B$ is a double and is adjacent to $a_2$, so its label is either $P$ or $P^c$. Since $|B|=3$, Lemma~\ref{lem:label}(iii) implies that at least two vertices of $B$ have label $P^c$. Lemma~\ref{lem:bip-label}(i) gives $N_U(a_1)\subseteq P$. Lemma~\ref{lem:label}(v) then implies that no vertex of $B$ has label $P$, so every vertex of $B$ has label $P^c$. The crossing edges contribute nothing to $q_1$, the edge inside $A$ contributes one, and the edge inside $B$ contributes two. Hence $q_1=3$, a contradiction.

If the singleton lies in $B$, say $b_1$ is the singleton, then $b_2$ and $b_3$ are doubles. If $N_U(a_1)=N_U(a_2)=P$, then Lemma~\ref{lem:bip-label}(i) gives $N_U(b_1)\subseteq P^c$ and $N_U(b_2)=N_U(b_3)=P^c$. Then only the edge $a_1a_2$ and the edge $b_1b_2$ contribute to $q_1$, so $q_1\le 3$, a contradiction. Thus, after relabelling, $N_U(a_1)=P$ and $N_U(a_2)=P^c$. We may assume $N_U(b_1)=\{u_1\}\subseteq P$. Lemma~\ref{lem:label}(v) gives $N_U(b_2)=P^c$, while Lemma~\ref{lem:label}(iii) gives $N_U(b_3)=P$. Now $u_1,b_1,a_2,b_3$ form a $4$-cycle in $G[\Nbr(a_1)]$, a contradiction.

\smallskip
\noindent\underline{\emph{Case 3: $\alpha_1=2$.}}

Here $q_1=6$. If both singletons lie in $A$, then all three vertices of $B$ are doubles. By Lemma~\ref{lem:label}(v), the six crossing edges contribute at most two to $q_1$. The edge inside $A$ contributes at most one, and the edge inside $B$ contributes at most two. Thus $q_1\le 5$, a contradiction.

Suppose exactly one singleton lies in each side. We may assume that $a_1\in A$ and $b_1\in B$ are singletons. Let $P=N_U(a_2)$. If $N_U(a_1)\subseteq P$, then Lemma~\ref{lem:label}(v) gives $N_U(b_2)=N_U(b_3)=P^c$. Thus the only edges that may contribute to $q_1$ are $a_1a_2$, $a_1b_1$, $b_1b_2$, and $b_1a_2$, so $q_1\le 4$, a contradiction. If $N_U(a_1)\subseteq P^c$, then one of $b_2,b_3$ has label $P$ and the other has label $P^c$. If $N_U(b_1)\subseteq P$, then the edges incident with $a_1$ contribute one to $q_1$ and all other edges contribute at most four. If $N_U(b_1)\subseteq P^c$, then the edges incident with $a_1$ contribute two and all other edges contribute at most three. In both cases $q_1\le 5$, a contradiction.

Finally, suppose both singletons lie in $B$, say $b_1,b_2$ are singletons. Then $b_3$ is a double and $a_1,a_2$ are doubles. If $a_1$ and $a_2$ have the same label $P$, then every vertex in $B$ has label contained in $P^c$ by Lemma~\ref{lem:bip-label}(i). Hence the crossing edges contribute nothing to $q_1$, the edge $a_1a_2$ contributes two, and the edge $b_1b_2$ contributes at most one. Thus $q_1\le 3$, a contradiction. If the labels of $a_1$ and $a_2$ are complementary, then $b_3$ has one of these two labels. The edges incident with $b_3$ contribute two to $q_1$, each of $b_1$ and $b_2$ contributes at most one through crossing edges, and the edge $b_1b_2$ contributes at most one. Thus $q_1\le 5$, a contradiction. This completes the case $n=9$.

\medskip
\providecommand{\thickunderline}{}
\renewcommand{\thickunderline}[2][2pt]{%
  \setbox0=\hbox{#2}%
  \vtop{\hbox{#2}\vspace{0.2ex}\hrule height #1}%
}
\noindent\thickunderline[1pt]
{\textbf{The case $n=11$.}}
Here $G'\cong H(7)$. We may write $V(G')=A\cup B$, where
$A=\{a_1,a_2,a_3\},$ $B=\{b_1,b_2,b_3,b_4\},$
$A$ and $B$ form a complete bipartite graph, the only edge in $A$ is $a_1a_2$, and the two edges in $B$ are $b_1b_2$ and $b_3b_4$.

\begin{lemma}\label{lem:n11-double}
Every vertex in $\{a_3,b_1,b_2,b_3,b_4\}$ is a double for $U$.
\end{lemma}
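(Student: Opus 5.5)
The plan is to mimic the $n=9$ argument: swap the roles of $U$ and a second copy of $K_4$ inside $G'$, and then apply the induction hypothesis together with the structure of $H(7)$.

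\textbf{Step 1: a second $K_4$.} Since $a_1a_2$ and $b_1b_2$ are edges and $A$ is complete to $B$, the set $U'=\{a_1,a_2,b_1,b_2\}$ induces a copy of $K_4$; likewise $U''=\{a_1,a_2,b_3,b_4\}$ does. Our setup applies to any $K_4$ in $G$: $t(G)=h(11)$, Lemma~\ref{lem:touching} and Fact~\ref{fact:diff} give equality throughout, and in particular $t(G-U')=h(7)$. The case $n=7$ has already been settled, so $G-U'\cong H(7)$, and similarly $G-U''\cong H(7)$.

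\textbf{Step 2: structure of $H(7)$.} We record that $H(7)=H_{3,4}$ contains exactly two copies of $K_4$. Each consists of the matching edge in the $3$-part together with one matching edge in the $4$-part. A $K_4$ in $H_{a,b}$ must use at most two vertices per part, and those two vertices must be adjacent, so it is formed from a matching edge on each side. Relative to either $K_4$, every one of the other three vertices has exactly two neighbours in it. Indeed, the unmatched vertex of the $3$-part sees the two $4$-part vertices of the $K_4$. A vertex of the other matching edge in the $4$-part sees the two $3$-part vertices of the $K_4$.

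\textbf{Step 3: transfer to $U$.} In $G-U'$, whose vertex set is $U\cup\{a_3,b_3,b_4\}$, the set $U$ induces a $K_4$. By Step 2, each of $a_3,b_3,b_4$ has exactly two neighbours in $U$ (adjacency is symmetric, so "neighbours of $a_3$ in $U$" is the same computed in either graph). Applying the same argument to $G-U''\cong H(7)$, whose vertices are $U\cup\{a_3,b_1,b_2\}$, shows that $a_3,b_1,b_2$ are doubles. Together these cover $\{a_3,b_1,b_2,b_3,b_4\}$.

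\textbf{Main obstacle.} The only delicate point is justifying that the equality analysis, and hence the case $n=7$, applies to $U'$ and $U''$ and not just to the originally fixed $U$. This holds because the derivation of $t(G-U)=h(n-4)$ used only that $U$ induces a $K_4$ and that $t(G)=h(n)$. I would state this explicitly before invoking the $n=7$ case, and also verify the two-$K_4$ claim about $H(7)$ carefully.
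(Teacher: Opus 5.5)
Your proposal is correct and follows the paper's proof essentially verbatim. You use the same two auxiliary copies $U'=\{a_1,a_2,b_1,b_2\}$ and $U''=\{a_1,a_2,b_3,b_4\}$ of $K_4$, the induction hypothesis (the settled case $n=7$) to get $G-U'\cong G-U''\cong H(7)$, and the fact that every vertex of $H(7)$ outside a copy of $K_4$ has exactly two neighbours in it. You also supply two details the paper leaves implicit: why $t(G-U')=h(7)$ holds for any $K_4$ and not only the fixed $U$, and why $H_{3,4}$ has the stated $K_4$ structure.
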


\begin{proof}
Let
$U'=\{a_1,a_2,b_1,b_2\},$ $U''=\{a_1,a_2,b_3,b_4\}.$
Both $G[U']$ and $G[U'']$  are copies of $K_4$. By the induction hypothesis, $G-U'\cong H(7)$ and $G-U''\cong H(7)$. Therefore both $G[\{a_3,b_3,b_4\}\cup U]$ and $G[\{a_3,b_1,b_2\}\cup U]$ are isomorphic to $H(7)$. In $H(7)$, every vertex outside a copy of $K_4$ has exactly two neighbors in that $K_4$. This proves the claim.
\end{proof}

Now $q_1=6+\alpha_1$ by \eqref{eq:T1}. Again we distinguish three cases.

\smallskip
\noindent\emph{Case 1: $\alpha_1=0$.} All vertices in $G'$ are doubles. By Lemma~\ref{lem:bip-label}(ii), there exists a $2$-set $P\subseteq U$ such that every vertex of $A$ has label $P$ and every vertex of $B$ has label $P^c$. After relabeling, assume $P=\{u_1,u_2\}$ and $P^c=\{u_3,u_4\}$. Then
$A\cup\{u_3,u_4\}$ and $B\cup\{u_1,u_2\}$
form the two parts of $H_{5,6}$. Thus $G\cong H(11)$.

\smallskip
\noindent\emph{Case 2: $\alpha_1=1$.} Then $q_1=7$. By Lemma~\ref{lem:n11-double}, the singleton lies in $A$. Thus $A$ contains two doubles and $B$ contains four doubles. Lemma~\ref{lem:bip-label}(ii) gives a $2$-set $P\subseteq U$ such that every double in $A$ has label $P$ and every double in $B$ has label $P^c$. The label of the singleton in $A$ is contained in $P$ by Lemma~\ref{lem:bip-label}(i). Hence the crossing edges contribute nothing to $q_1$, the unique edge in $A$ contributes at most one, and the two edges in $B$ contribute four. Thus $q_1\le 5$, a contradiction.

\smallskip
\noindent\emph{Case 3: $\alpha_1=2$.} Then $q_1=8$. By Lemma~\ref{lem:n11-double}, the two singletons are $a_1$ and $a_2$. The vertex $a_3$ is a double, and $B$ contains four doubles. Let $N_U(a_3)=P$. By Lemma~\ref{lem:label}(iii), at least three doubles in $B$ have label $P^c$. Lemma~\ref{lem:bip-label}(i) implies that the labels of all vertices in $A$ are contained in $P$. We may assume $N_U(b_1)=N_U(b_2)=N_U(b_3)=P^c$, and consider the label of $b_4$.

If $N_U(b_4)=P^c$, then the crossing edges contribute nothing to $q_1$. The only edge in $A$, namely $a_1a_2$, contributes at most one, while the two edges inside $B$ contribute four. Hence $q_1\le 5$, a contradiction.

If $N_U(b_4)=P$, then the crossing edges contribute four to $q_1$ in total. The edge $b_1b_2$ contributes two, the edge $b_3b_4$ contributes zero, and the edge $a_1a_2$ contributes at most one. Thus $q_1\le 7$, a contradiction. This completes the case $n=11$.

\medskip
\providecommand{\thickunderline}{}
\renewcommand{\thickunderline}[2][2pt]{%
  \setbox0=\hbox{#2}%
  \vtop{\hbox{#2}\vspace{0.2ex}\hrule height #1}%
}
\noindent\thickunderline[1pt]
{\textbf{The case $n\ge 13$.}}
Recall that $G'\cong H_{a,b}$, where
\[
 a=\frac{n-5}{2}\ge 4,
 \qquad
 b=\frac{n-3}{2}\ge 5.
\]
Let $A$ and $B$ be the two parts of $H_{a,b}$, with $|A|=a$ and $|B|=b$. Let
$D=\{x\in V(G)\setminus U:d_U(x)=2\}$
be the set of doubles. By \eqref{eq:alpha},
$|A\cap D|\ge a-2\ge 2,$
$|B\cap D|\ge b-2\ge 3.$
Choose $x\in A\cap D$ and set $P=N_U(x)$ and $P^c=U\setminus P$. For each $y\in B\cap D$, since $xy\in E(G)$, Lemma~\ref{lem:label}(ii) implies that $y$ has label either $P$ or $P^c$. For two disjoint subsets $L$ and $R$ of $V(G)$, the set of edges between $L$ and $R$ in $G$ is denoted by $E_G(L,R)$.

\begin{lemma}\label{lem:no-wrong-side}
We have $E_G(A,P^c)=\emptyset$ and $E_G(B,P)=\emptyset$.
\end{lemma}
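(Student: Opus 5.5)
The plan is to first pin down the labels of all doubles using Lemma~\ref{lem:bip-label}, and then deal with the (at most two) singletons. We have $|A\cap D|\ge 2$ and $|B\cap D|\ge 3$, and $A,B$ span a complete bipartite graph in $G'$. So Lemma~\ref{lem:bip-label}(ii) yields a unique $2$-set $Q$ such that every double in $A$ has label $Q$ and every double in $B$ has label $Q^c$. Since $x\in A\cap D$ has label $P$, we get $Q=P$. Hence every double in $A$ has label exactly $P$, and every double in $B$ has label exactly $P^c$. This already shows that no double in $A$ sends an edge to $P^c$ and no double in $B$ sends an edge to $P$.

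It remains to treat the singletons, of which there are at most two by \eqref{eq:alpha}. Suppose first that $s\in A$ is a singleton. Then $B$ contains at least $b-2\ge 3$ doubles, all with label $P^c$, and $s$ is adjacent to all of them. Lemma~\ref{lem:bip-label}(i), applied to two of these doubles in $B$ with the common label $P^c$, gives $N_U(s)\subseteq (P^c)^c=P$. So $s$ has no neighbour in $P^c$. Symmetrically, suppose $s\in B$ is a singleton. Then $A$ contains at least $a-2\ge 2$ doubles, all with label $P$, and Lemma~\ref{lem:bip-label}(i) gives $N_U(s)\subseteq P^c$. Together these give $E_G(A,P^c)=\emptyset$ and $E_G(B,P)=\emptyset$. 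Note that we only use the definition of labels here: $N_U(v)$ is exactly the set of neighbours of $v$ in $U$, so the edges between a vertex and $U$ are determined by its label.

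The only delicate point is checking that the hypotheses of Lemma~\ref{lem:bip-label} hold. Part (ii) needs at least two doubles in $A$ and at least three in $B$, and this is exactly where the bounds $a\ge 4$ and $b\ge 5$ from $n\ge 13$ enter through $\alpha_1\le 2$. For the singleton step we need two doubles with a common label on the opposite side, which follows from the same counts. Beyond this, I expect the argument to be a direct combination of Lemma~\ref{lem:bip-label}(i) and (ii).
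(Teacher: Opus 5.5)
Your proposal is correct and follows essentially the same route as the paper: Lemma~\ref{lem:bip-label}(ii) fixes the labels of all doubles in $A$ and $B$ as $P$ and $P^c$. Lemma~\ref{lem:bip-label}(i), applied with two doubles of common label on the opposite side (guaranteed by $|A\cap D|\ge 2$ and $|B\cap D|\ge 3$), then confines the label of every remaining vertex of $A$ to $P$ and of $B$ to $P^c$.
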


\begin{proof}
By Lemma~\ref{lem:bip-label}(ii), every double in $A$ has label $P$ and every double in $B$ has label $P^c$. Since $A\cap D$ has at least two vertices and $B\cap D$ has at least three vertices, Lemma~\ref{lem:bip-label}(i) also implies that the label of each vertex in $A$ is contained in $P$ and the label of each vertex in $B$ is contained in $P^c$. Thus no vertex of $A$ is adjacent to a vertex of $P^c$, and no vertex of $B$ is adjacent to a vertex of $P$.
\end{proof}

Now consider a triangle $T=\{u_i,v,w\}$ intersecting $U$ in exactly one vertex $u_i$. Lemma~\ref{lem:no-wrong-side} implies that either $\{v,w\}\subseteq A$ or $\{v,w\}\subseteq B$. Since the total number of edges inside $A$ and inside $B$ is
\[
 \left\lfloor\frac a2\right\rfloor+\left\lfloor\frac b2\right\rfloor=\frac{n-5}{2},
\]
and each such edge lies in at most two triangles counted by $q_1$, we obtain
\begin{equation}\label{eq:T1-bound}
 q_1\le n-5.
\end{equation}
Comparing \eqref{eq:T1-bound} with \eqref{eq:T1}, we get $\alpha_1=0$. Thus there are no singletons. Consequently, every vertex of $A$ is a double with label $P$, and every vertex of $B$ is a double with label $P^c$.

It follows that $G$ is obtained from the complete bipartite graph with parts
$A\cup P^c$ and $B\cup P$
by adding a maximum matching inside each part. Therefore
$G\cong H_{a+2,b+2}=H(n).$
This completes the induction for odd $n$ and hence the proof of Proposition~\ref{prop:extremal}.

\section{Concluding remarks}
In this paper, we determine the exact value of $\ex(n, K_3, \Wge{4})$ and characterize all extremal graphs for every integer $n\ge 3$. A natural extension is to consider $\ex(n, K_3, \Wge{k})$ for $k\ge 5$.

Let $\EX(n,T,\mathcal{F})$ be the set of $n$-vertex $\mathcal{F}$-free graphs containing $\ex(n,T,\mathcal{F})$ copies of $T$. When $T=K_2$, we simply write $\EX(n,\mathcal{F})$. Combining Theorems \ref{main} and \ref{thm:W4}, we note that $\EX(n,K_3,\Wge{4})\subseteq\EX(n,W_{4})$. A natural question is whether $\EX(n,K_3,\Wge{k})\subseteq\EX(n,W_{k})$ holds for $k\ge5$ and $n$ sufficiently large. We discuss this problem according to the parity of $k$.

For the case $k = 2s+1\ge 5$, the answer is negative. Let $T(n,3)$ be the complete balanced $3$-partite graph with $n$ vertices. We first note that $\ex(n,W_{2s+1}) = e(T(n,3))$ and  the only extremal graph is $T(n,3)$ for sufficiently large $n$, as $W_{2s+1}$ is a color-critical graph. Thus we have
\[
\EX(n,K_3,\Wge{2s+1})\not \subseteq\EX(n,W_{2s+1})=\{T(n,3)\}\quad\text{and}\quad\ex(n,\Wge{2s+1})<\ex(n,W_{2s+1})
\]
for  sufficiently large $n$, as $T(n,3)$ contains $W_{2s+2}$.  It is  of interest to investigate the Tur\'an number and the generalized Tur\'an number for $\Wge{2s+1}$.

For the case $k = 2s\ge 6$, we introduce the following definition. A graph is called {\em nearly $(s-1)$-regular} if all vertices except one have degree $s-1$, and the remaining vertex has degree $s-2$. Let $\mathcal{U}_{s,n}$ denote the family of $(s-1)$-regular or nearly $(s-1)$-regular graphs on $n$ vertices that contain no path on $2s-1$ vertices. This family is non-empty whenever $n \ge 2s$ (see Proposition 2.1 in \cite{Y}).

Define the family $\mathcal{Y}(n,2s)$ as the collection of graphs formed by taking a complete bipartite graph with parts of sizes $n_1$ and $n_2$ (where $n_1\ge n_2$),  embedding a graph from $\mathcal{U}_{s, n_1}$ into the larger part, and placing a single edge in the smaller part. If
\[
n_1 \in \left\{ \left\lfloor\frac{2n + s - 1}{4}\right\rfloor, \left\lceil\frac{2n + s - 1}{4}\right\rceil \right\},
\]
then we refer to this family as $\mathcal{Y}'(n,2s)$.
Yuan \cite{Y} proved that $\ex(n,W_{2s}) = e(\mathcal{Y}'(n,2s))$ and $\EX(n,W_{2s})=\mathcal{Y}'(n,2s)$ for sufficiently large $n$. Observing the structure of $\mathcal{Y}'(n,2s)$, we note that every member of $\mathcal{Y}'(n,2s)$ is $\Wge{2s}$-free. Otherwise, suppose there is a member $M \in \mathcal{Y}'(n,2s)$  such that $W_{\ell} \subseteq M$ for some $\ell\ge 2s+1$. Let $u$ be the center vertex of $W_{\ell}$. If $u$ lies in the larger part, then its neighborhood inside the larger part has size at most $s-1$. Together with the single edge in the smaller part, this forces any cycle in $N_M(u)$ to have length at most $2s-1$, so $N_M(u)$ contains no copy of $C_{\ell}$. If $u$ lies in the smaller part, then a  cycle with length at least $2s+1$ in its neighborhood would give a path $P_{2s-1}$ in the larger part, which is impossible. Hence again $N_M(u)$ contains no $C_{\ell}$ for $\ell \geq 2s+1$. In either case we obtain a contradiction. Thus every member of $\mathcal{Y}'(n,2s)$ is $\Wge{2s}$-free. Consequently, we obtain the following result:

\begin{proposition}
	For sufficiently large $n$, $\ex(n,\Wge{2s}) = \ex(n,W_{2s})$.
\end{proposition}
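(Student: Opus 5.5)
The plan is to prove the two inequalities $\ex(n,\Wge{2s})\le \ex(n,W_{2s})$ and $\ex(n,\Wge{2s})\ge \ex(n,W_{2s})$ separately, for $s\ge 3$ and $n$ large enough that Yuan's theorem \cite{Y} applies.

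The upper bound is immediate from monotonicity. Since $W_{2s}\in\Wge{2s}$, every $\Wge{2s}$-free graph is in particular $W_{2s}$-free. Hence $\ex(n,\Wge{2s})\le \ex(n,W_{2s})$ holds for every $n$.

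For the lower bound, I would take $n$ large enough that Yuan's theorem gives $\ex(n,W_{2s})=e(\mathcal{Y}'(n,2s))$, and that $n_1\ge 2s$ so $\mathcal{U}_{s,n_1}$ is nonempty and $\mathcal{Y}'(n,2s)$ is nonempty. Pick any member $M$. By the discussion preceding the proposition, $M$ contains no $W_\ell$ with $\ell\ge 2s+1$. It then remains to check that $M$ contains no $W_{2s}$ itself, since it is $W_{2s}$-free, being a member of $\EX(n,W_{2s})$ by Yuan's characterization. Consequently $M$ is $\Wge{2s}$-free, which gives $\ex(n,\Wge{2s})\ge e(M)=\ex(n,W_{2s})$.

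The main obstacle is verifying the wheel-freeness of $M$ for $\ell\ge 2s+1$, which rests on the case analysis of the center $u$. I would re-check it as follows.

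If $u$ lies in the larger part, $N_M(u)$ consists of at most $s-1$ vertices of the larger part together with the whole smaller part. The smaller part contains only one edge, so any cycle in $N_M(u)$ alternates between the two parts except for at most one step. Its length is therefore at most $2(s-1)+1=2s-1$.

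If $u$ lies in the smaller part, $N_M(u)$ contains the larger part plus at most one vertex of the smaller part. A cycle of length at least $2s+1$ there would, after deleting that one vertex, leave a path on at least $2s$ vertices inside the larger part. This contradicts the absence of $P_{2s-1}$ in the graph from $\mathcal{U}_{s,n_1}$.
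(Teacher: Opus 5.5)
Your proposal is correct and follows the paper's argument: monotonicity gives the upper bound, and the lower bound comes from checking that the extremal graphs in $\mathcal{Y}'(n,2s)$ from Yuan's theorem contain no $W_\ell$ with $\ell\ge 2s+1$, splitting on which part contains the center. One small fix concerns the case where $u$ lies in the larger part: a cycle in $N_M(u)$ need not alternate between the parts, because neighbours of $u$ inside the larger part can be adjacent. So justify the length bound by counting instead: if the cycle uses $a\le s-1$ larger-part vertices and $b$ smaller-part vertices, then the number of cycle edges between the parts is at least $2b-2$ and at most $2a$, so the cycle has length $a+b\le 2a+1\le 2s-1$.
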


Observe that $\mathcal{Y}(n,2s)$ is also $\Wge{2s}$-free. This suggests asking
whether $\EX(n,K_3,\Wge{2s})\subseteq \mathcal{Y}(n,2s)$ for sufficiently large $n$. Let $Y \in \mathcal{Y}(n,2s)$ with vertex partition $V(Y) = V(Y_1) \cup V(Y_2)$, where $n_i=|Y_i|$ and $n_1 \ge n_2$. Note that $e(Y_1) = \left\lfloor\frac{(s-1)n_1}{2}\right\rfloor$ with $n_1 = |Y_1|$.  This leads to determining whether the following asymptotic equality holds:
\[
	\ex(n, K_3, \Wge{2s}) = \max\left\{ \left\lfloor\frac{(s-1)n_1}{2}\right\rfloor n_2 : n_1 + n_2 = n, n_1\geq n_2 \right\} + o(n^2).
\]

\section*{Acknowledgment}
Part of this paper was finished when the second author was visiting Anhui University. The second author is grateful for the warm hospitality.

\end{document}